\providecommand{\U}[1]{\protect\rule{.1in}{.1in}}
\theoremstyle{definition}
\newtheorem{theo}{Theorem}[section]
\newenvironment{theorem}[1][]
{\begin{theo}[#1]\begin{leftbar}}
{\end{leftbar}\end{theo}}
\newtheorem{lem}[theo]{Lemma}
\newenvironment{lemma}[1][]
{\begin{lem}[#1]\begin{leftbar}}
{\end{leftbar}\end{lem}}
\newtheorem{prop}[theo]{Proposition}
\newenvironment{proposition}[1][]
{\begin{prop}[#1]\begin{leftbar}}
{\end{leftbar}\end{prop}}
\newtheorem{defi}[theo]{Definition}
\newenvironment{definition}[1][]
{\begin{defi}[#1]\begin{leftbar}}
{\end{leftbar}\end{defi}}
\newtheorem{remk}[theo]{Remark}
\newenvironment{remark}[1][]
{\begin{remk}[#1]\begin{leftbar}}
{\end{leftbar}\end{remk}}
\newtheorem{coro}[theo]{Corollary}
\newenvironment{corollary}[1][]
{\begin{coro}[#1]\begin{leftbar}}
{\end{leftbar}\end{coro}}
\newtheorem{conv}[theo]{Convention}
\newenvironment{convention}[1][]
{\begin{conv}[#1]\begin{leftbar}}
{\end{leftbar}\end{conv}}
\newtheorem{conj}[theo]{Conjecture}
\newenvironment{conjecture}[1][]
{\begin{conj}[#1]\begin{leftbar}}
{\end{leftbar}\end{conj}}
\newtheorem{exam}[theo]{Example}
\newenvironment{example}[1][]
{\begin{exam}[#1]\begin{leftbar}}
{\end{leftbar}\end{exam}}
\newenvironment{verlong}{}{}
\newenvironment{vershort}{}{}
\newcommand{\abs}[1]{\left| #1 \right|}
\newcommand{\tup}[1]{\left( #1 \right)}
\newcommand{\Fq}{\mathbb{F}_q}
\renewcommand{\leq}{\leqslant}
\renewcommand{\geq}{\geqslant}
\begin{document}

\title{Multislant matrices and Jacobi--Trudi determinants over finite fields}
\author{Omesh Dhar Dwivedi, Jonah Blasiak, Darij Grinberg}
\date{June 5, 2023}
\maketitle

\textbf{Abstract.}
The problem of counting the $\mathbb{F}_q$-valued points of a variety has been well-studied from algebro-geometric, topological, and combinatorial perspectives.  We explore a combinatorially flavored version of this problem studied by Anzis et al. \cite{Anzis18}, which is similar to work of Kontsevich \cite{Kontsevich}, Elkies \cite{Elkies}, and Haglund \cite{Haglund}.

Anzis et al. considered the question: what is the probability that the determinant of a Jacobi-Trudi matrix vanishes if the variables are chosen uniformly at random from a finite field?
They gave a formula for various partitions such as hooks, staircases, and rectangles.
We give a formula for partitions whose parts form an arithmetic progression,
verifying and generalizing one of their conjectures.
More generally, we compute the probability of the determinant vanishing for a class of matrices (``multislant matrices'') made of Toeplitz blocks with certain properties.

We furthermore show that the determinant of a skew Jacobi-Trudi matrix is equidistributed across the finite field if the skew partition is a ribbon.

\bigskip\noindent\textbf{Keywords:} Jacobi--Trudi matrices, Schur functions, finite fields, determinants, Toeplitz matrices.

\bigskip\noindent\textbf{MSC classes (2020 Mathematics Subject Classification):} 05E05, 15B05, 11T06, 11C20.

\section{\label{sec.introduction}Introduction}

Let $\mathbb{F}_q$ be a finite field.

A \emph{Jacobi--Trudi matrix} is a matrix of the form
\begin{align*}
J_{\lambda_1, \lambda_2, \ldots, \lambda_k}\tup{z_1, z_2, z_3, \ldots}
& := 
\left(  z_{\lambda_{i}-i+j}\right)_{1\leq i\leq k,\ 1\leq j\leq k}
\\
&= \begin{pmatrix}
z_{\lambda_1} & z_{\lambda_1 + 1} & \cdots & z_{\lambda_1 + k-1} \\
z_{\lambda_2 - 1} & z_{\lambda_2} & \cdots & z_{\lambda_2 + k-2} \\
\vdots & \vdots & \ddots & \vdots \\
z_{\lambda_k - k+1} & z_{\lambda_k - k+2} & \cdots & z_{\lambda_k}
\end{pmatrix} ,
\end{align*}
where $\lambda_{1} \geq \lambda_{2} \geq \cdots \geq \lambda_{k} \geq 0$ are integers.
Here, $z_1, z_2, z_3, \ldots$ are elements of $\Fq$ chosen at random (uniformly and independently), and we furthermore set $z_{0}=1$ and $z_{i}=0$ for $i<0$.

We study the probability that the determinant
$\det\tup{ J_{\lambda_1, \lambda_2, \ldots, \lambda_k}\tup{z_1, z_2, z_3, \ldots} }$
of this matrix vanishes.
This probability was computed in
\cite{Anzis18} for certain families of $\lambda_i$'s.
In particular, this probability is $\dfrac{1}{q}$ in the following cases:
\begin{itemize}
\item the ``\emph{rectangle case}'': $\lambda_i = m$ for each $i$ (see \cite[Corollary 6.4]{Anzis18}, or \cite{dwivedi2021rank} for a generalization);
\item the ``\emph{staircase case}'': $\lambda_i = k+1-i$ for each $i$ (see \cite[Theorem 6.5]{Anzis18});
\item the ``\emph{hook case}'': $\lambda_1$ arbitrary; $\lambda_i = 1$ for
each $i = 2, 3, \ldots, k$ (see \cite[Proposition 1.2]{Anzis18}).
\end{itemize}
In more complicated situations, the probability can be less well-behaved,
although Anzis et al. prove \cite[Corollary 5.2]{Anzis18} that it is
always greater or equal to $\dfrac{1}{q}$ and conjecture
\cite[Conjecture 5.10]{Anzis18} that it is always less than or equal to
\[
1 - \abs{\operatorname{GL}_k\tup{\Fq}} / q^{k^2}
= 1 - \prod_{i=1}^k \tup{1 - \dfrac{1}{q^i}} .
\]

The main result of the present paper is a formula for this probability
in yet another case (proved in \S\ \ref{sec.mulslant}):

\begin{theorem}
\label{thm.n.staircase-intro}
Let
\[\left(\lambda_1, \lambda_2, \ldots, \lambda_k\right)
= (p+(k-1)n, \ p+(k-2)n, \ \ldots, \ p+n, \  p) , \]
for some integers $p$, $n$ and $k$ satisfying $0 < p \leq n \leq k-1$.
Then, the probability of
$\det\tup{ J_{\lambda_1, \lambda_2, \ldots, \lambda_k}\tup{z_1, z_2, z_3, \ldots} }$
vanishing is
\[
1- \prod_{i=1}^{n} \left(  1-\dfrac{1}{q^{i}}\right) .
\]
\end{theorem}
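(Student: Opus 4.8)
The plan is to identify $J := J_{\lambda_1,\dots,\lambda_k}\tup{z_1,z_2,\dots}$ as a ``multislant matrix'' in the sense of \S\ref{sec.mulslant} and to deduce the stated probability from the general vanishing--probability formula proved there. The feature that makes $J$ multislant is that $\lambda_i-\lambda_{i+1}=n$ is constant in $i$: consequently each row of $J$ is a translate of the previous one, shifted by $n+1$ positions in the $z$-index, so $J$ is built from Toeplitz blocks along a slanted diagonal, while the conventions $z_0=1$ and $z_{<0}=0$ supply a unitriangular corner. For this family the parameter of the multislant machinery that governs the answer turns out to equal $n$, which is the origin of the $n$ in $1-\prod_{i=1}^{n}\tup{1-q^{-i}}$.

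It is instructive, and I would do it first, to settle the case $k=n+1$ directly, since it already contains the mechanism behind the formula. Here $J$ is an $\tup{n+1}\times\tup{n+1}$ matrix. The hypotheses $0<p\le n=k-1$ guarantee that its top $n$ rows involve the $n\tup{n+1}$ pairwise distinct variables $z_{p+1},z_{p+2},\dots,z_{p+n\tup{n+1}}$ and no others, so the $n\times\tup{n+1}$ submatrix $M$ formed by them is uniformly distributed among all $n\times\tup{n+1}$ matrices over $\Fq$; and the bottom row of $J$ equals $\tup{0,\dots,0,1,z_1,\dots,z_p}$, with the $1$ in column $n+1-p$ and with $z_1,\dots,z_p$ a block of fresh uniform variables independent of $M$. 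Now $\det J\neq 0$ if and only if $M$ has full rank $n$ and the bottom row does not lie in $\operatorname{rowspace}\tup{M}$. Conditioning on $M$ having full rank, $\operatorname{rowspace}\tup{M}$ is a uniformly random $n$-dimensional subspace $H$ of $\Fq^{n+1}$ (a full-rank matrix's row space is uniform among subspaces of its dimension); and since the bottom row is never the zero vector, $\Pr\ive{\text{bottom row}\notin H}=q^{n}\tup{q-1}/\tup{q^{n+1}-1}$ whatever the joint law of its entries. Combining this with $\Pr\ive{\operatorname{rank}M=n}=\prod_{j=2}^{n+1}\tup{1-q^{-j}}$ (the probability that $n$ uniform vectors in $\Fq^{n+1}$ are linearly independent) yields
\[
\Pr\ive{\det J\neq 0}
= \left(\prod_{j=2}^{n+1}\tup{1-q^{-j}}\right)\cdot\frac{q^{n}\tup{q-1}}{q^{n+1}-1}
= \prod_{j=1}^{n}\tup{1-q^{-j}},
\]
the last step being a telescoping in which the $j=n+1$ factor cancels; this is the asserted formula.

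For general $k>n+1$ I would run the same argument inductively. The bottom rows $k,k-1,k-2,\dots$ of $J$ each carry a leading $1$ (the entry $z_0$) in a column that moves left by $n+1$ from one row to the next, with zeros to its left; using these entries to clear their columns by elementary column operations peels a unitriangular block off $J$ and rewrites $\det J$, up to sign, as the determinant of a strictly smaller matrix. The essential point is that, thanks to the Toeplitz block structure, this smaller matrix is again of multislant type with the same parameter $n$, so the reduction can be iterated until one reaches a multislant matrix small enough to be handled by a direct count of the same flavor as the one above. I expect the main obstacle to lie not in this outline but in the bookkeeping beneath it: one must check that each elimination step really lands back inside the multislant class, and---more delicately---that the polynomial change of coordinates it induces on $\tup{z_1,z_2,\dots}$ pushes the uniform distribution forward to the uniform distribution on the entries of the reduced matrix, so that the counting arguments above stay valid at every stage. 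Granting those points, applying the general multislant formula of \S\ref{sec.mulslant} to $J$---whose relevant parameter is $n$---completes the proof.
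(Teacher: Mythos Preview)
Your overall plan---identify $J$ as a multislant matrix and apply the general vanishing-probability formula of \S\ref{sec.mulslant}---is precisely the paper's route: Lemma~\ref{multistair} permutes the columns of $J(\lambda)$ by their residue modulo $n+1$ to exhibit a multislant matrix with $n+1$ blocks and signature $(p,\,n-p,\,1)$, and Theorem~\ref{thm.multislant.prob} then gives $\operatorname{SiPr} J = 1-\gamma_{n+1}(1-0^{1}/q^{p}) = 1-\gamma_{n+1}$. The one concrete datum your write-up omits is this signature (in particular that there is exactly one type-1 block, coming from the $z_0=1$ on the bottom row, and $p$ type-X blocks from $z_1,\ldots,z_p$); without it the formula cannot be invoked.

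Your direct treatment of the case $k=n+1$ is correct and is a genuinely different, self-contained argument with no analogue in the paper: there the top $n$ rows really are a uniformly random $n\times(n+1)$ matrix, and your hyperplane count is clean. For $k>n+1$, however, the column-elimination induction you sketch is not how the paper proceeds, and---as you yourself flag---the bookkeeping is where the content lies. After clearing the bottom row using the $z_0=1$ pivot, the modified columns carry entries such as $z_{n+1+j}-z_j\,z_{n+1}$, and verifying that the resulting matrix is again multislant with a controllable signature is nontrivial. The paper sidesteps this entirely: it proves Theorem~\ref{thm.multislant.prob} for \emph{arbitrary} square multislant matrices by a nested induction on the matrix size and on $2i+\ell$ (where $(i,j,\ell)$ is the signature), via three reductions: condition on the bottom element of a type-X block (Lemma~\ref{lem.multislant.redX}), convert one of two type-1 blocks into type~0 by subtracting scaled columns (Lemma~\ref{lem.multislant.red11}), and Laplace-expand along the bottom row once the signature is $(0,k-1,1)$ (Lemma~\ref{lem.multislant.red01}). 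Since your proposal ultimately falls back on this general formula, it lands in the right place, but your inductive sketch is neither needed nor, as stated, a substitute for it.
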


Mysteriously, the right hand side here is also the probability of a completely random generic $n\times n$-determinant vanishing.


Underlying
our proof is a much more general statement that applies to a broader
class of determinants with similar structures (determinants of
what we call ``multislant matrices'').

Theorem~\ref{thm.n.staircase-intro} proves and generalizes \cite[Conjecture 10.1]{Anzis18} (and also confirms the upper bound conjecture \cite[Conjecture 5.10]{Anzis18} for the class of partitions covered by this theorem).

Theorem~\ref{thm.n.staircase-intro} will be proved as part \textbf{(ii)} of Theorem~\ref{thm.n.staircase} further below. Part \textbf{(i)} of the same theorem will provide a similar answer in the case when $k < n+1$. The case $p > n$ appears to give rise to a similar formula for the vanishing probability (Conjecture~\ref{bigoutwardstaircases}), but one we have been unable to prove.
\\

A crucial motivation for studying the determinants of these Jacobi--Trudi matrices
is the theory of \emph{Schur functions}:
If we replace the $z_{i}$ by the generators $h_{i}$ of the
ring of symmetric functions (see \S~\ref{sec.def} for a definition), then
$\det\tup{J_{\lambda_1, \lambda_2, \ldots, \lambda_k}\tup{z_1, z_2, z_3, \ldots}}$
becomes a Schur function.
This is the viewpoint taken by \cite{Anzis18} when studying these determinants.

Our results can also be viewed from a geometric perspective:
We are counting $\Fq$-valued points on certain affine schemes;
similar questions have been studied by Weil \cite{Weil}, Kontsevich \cite{Kontsevich}, Stembridge \cite{Stembridge}, Stanley \cite{RPStan}, Belkale-Brosnan \cite{Belkale2003}, Elkies \cite{Elkies}, Haglund \cite{Haglund}, and many others.
A question often asked is whether the number of $\Fq$-valued points has the form $f\tup{q}$ for some Laurent polynomial $f$; this is not always satisfied for $\det\tup{J_{\lambda_1, \lambda_2, \ldots, \lambda_k}\tup{z_1, z_2, z_3, \ldots}}$ (see \cite[Proposition 5.7]{Anzis18}), but is true in surprisingly many situations, including the ones we analyze here. (The same question has been asked by Kontsevich for certain graph-theoretical polynomials, and answered in the negative \cite{Belkale2003}.)
\\

We note that the values of $\det\tup{ J_{\lambda_1, \lambda_2, \ldots, \lambda_k}\tup{z_1, z_2, z_3, \ldots} }$ are not equidistributed in $\Fq$ in general, even when the value $0$ is taken with probability $\dfrac{1}{q}$. For example, equidistribution is satisfied in the hook case (\cite[Proposition 1.2]{Anzis18}) but not in the rectangle case (\cite[Lemma 9.3]{Anzis18}). Generalizing the hook case, we show that equidistribution holds for \emph{ribbons} (a class of skew partitions).
We also prove a certain symmetry property (Theorem~\ref{thm.transpose}) for the probabilities of our determinants taking certain values.

\subsubsection*{Acknowledgments}

We thank the referees for helpful comments that improved the clarity of the present paper.

\section{\label{sec.def}Notations and background}

Fix a finite field $\Fq$. Consider the polynomial ring
\[
\mathcal{P}  :=  \mathbb{Z}\left[  h_{1},h_{2},h_{3},\ldots\right]
\]
in countably many indeterminates $h_{1},h_{2},h_{3},\ldots$.

\begin{definition}
\label{def.mapstozero}
For any $f\in\mathcal{P}$ and $a\in \Fq$,
let $N\in\mathbb{N}$ be such that $f$ only
involves the indeterminates $h_{1},h_{2},\ldots,h_{N}$.
Define the \emph{probability of $f$ evaluating to $a$}
to be the rational number
\[
P\left(  f\mapsto a\right)   := \dfrac{\left(  \text{\# of }\left(  z_{1}%
,z_{2},\ldots,z_{N}\right)  \in \Fq^{N} \mid f\left(
z_{1},z_{2},\ldots,z_{N}\right)  =a\right)  }{\left(  \text{\# of all }\left(
z_{1},z_{2},\ldots,z_{N}\right)  \in \Fq^{N}\right)  }.
\]
\end{definition}

Note that the right hand side is independent on the choice of $N$
because increasing $N$ by $1$
merely multiplies the numerator and the denominator by $q$.

As the name suggests, $P\tup{f \mapsto a}$ is the probability that
$P$ evaluates to $a$ when $h_1, h_2, h_3, \ldots$ are specialized
to randomly chosen elements of $\Fq$ (chosen uniformly and
independently). Since $f$ involves only finitely many indeterminates,
this does not actually require choosing infinitely many random
elements.

We can identify the polynomial ring
$\mathcal{P}=\mathbb{Z}\left[  h_{1},h_{2},h_{3},\ldots\right]  $
with the ring $\Lambda$ of ``\emph{symmetric
functions}'', which are in fact symmetric formal power series in
countably many indeterminates $x_{1},x_{2},x_{3},\ldots$
having bounded degree (see \cite[Section 7.1]{EC2} or
\cite[Section I.2]{Macdonald}).
To do so, we equate each generator $h_i$ of $\mathcal{P}$
with the complete homogeneous symmetric function
$h_i\tup{x_1, x_2, x_3, \ldots}
= \sum\limits_{j_1 \leq j_2 \leq \ldots \leq j_i}
x_{j_1} x_{j_2} \cdots x_{j_i} \in \Lambda$.
In this paper, we will not actually use the $x_i$.
However, we will use some of the structure of $\Lambda$, in
particular the omega involution $\omega \colon \Lambda \to \Lambda$
(see, e.g., \cite[Section 7.6]{EC2} or \cite[(I.2.7)]{Macdonald}).

\begin{definition}
A \emph{partition} $\lambda = \left(\lambda_1, \lambda_2, \ldots, \lambda_k\right)$ is a
weakly decreasing finite sequence of positive integers, with
$\lambda_1 \geq \lambda_2 \geq \cdots \geq \lambda_k$.
\end{definition}


There are several important concepts associated with a partition.

\begin{definition}
The \emph{size} of a partition
$\lambda = \left(\lambda_1, \lambda_2, \ldots, \lambda_k\right)$
is the sum $\lambda_1 + \lambda_2 + \cdots + \lambda_k$.
It is denoted by $\abs{\lambda}$.

The \emph{length} of a partition
$\lambda = \left(\lambda_1, \lambda_2, \ldots, \lambda_k\right)$
is the number $k$.
It is denoted by $\ell\tup{\lambda}$.
\end{definition}

\begin{definition}
The \emph{Young diagram} $Y\tup{\lambda}$ of a partition
$\lambda = \left(\lambda_1, \lambda_2, \ldots, \lambda_k\right)$
is a table (not necessarily of rectangular shape).
Its boxes (also known as cells) are arranged in
$k$ left-justified rows, with the $i$-th row (counted from the top)
having $\lambda_i$ boxes. (Thus, the total number of boxes
is $\abs{\lambda}$.)

Formally, $Y\tup{\lambda}$ is defined as the set of all pairs
$\tup{i, j}$ of positive integers satisfying $i \leq k$ and
$j \leq \lambda_i$. The pair $\tup{i, j}$ corresponds to the
$j$-th box (from the left) in the $i$-th row of the diagram.
\end{definition}

\begin{definition}
The \emph{conjugate} $\lambda^t$ of a partition $\lambda$ is defined to be the partition whose Young diagram is obtained from $Y\tup{\lambda}$ by flipping it across the main diagonal (i.e., the length of the $i$-th row of $Y\tup{\lambda^t}$ equals the length of the $i$-th column of $Y\tup{\lambda}$ for each $i$).
\end{definition}

Example \ref{ydiagexample} below shows the construction of a Young diagram for the partition $\lambda = (7,4,1)$ and its conjugate. Note that the number of boxes in any row is the corresponding part of $\lambda$. Since a partition is a weakly decreasing sequence, the number of boxes in a row decreases as we go down the diagram.

\Needspace{10pc}

\begin{example}
\label{ydiagexample}
The conjugate of $\lambda = (7,4,1)$ is $\lambda^t =(3,2,2,2,1,1,1)$.
Here are the Young diagrams of these two partitions:
\[
\begin{tabular}{ccc}
$Y(\lambda)$ & & $Y(\lambda^t)$ \\
\ydiagram{7,4,1}
&
&
\ydiagram{3,2,2,2,1,1,1}
\end{tabular}
\]
\end{example}



\begin{definition}
If $\lambda = \tup{\lambda_1, \lambda_2, \ldots, \lambda_k}$ is
a partition, then we set $\lambda_i  :=  0$ for each $i > k$.
Thus, the partition $\lambda$ is identified with the infinite
weakly decreasing
sequence $\tup{\lambda_1, \lambda_2, \lambda_3, \ldots}$
whose entries stabilize at $0$.
\end{definition}

A generalization of partitions are \emph{skew partitions}.

\begin{definition}

\begin{enumerate}

\item[\textbf{(a)}]
A \emph{skew partition} is a pair of partitions $(\lambda, \mu)$ such that $Y\tup{\mu} \subseteq Y\tup{\lambda}$ (or, equivalently, such that $\mu_i \leq \lambda_i$ for each $i \geq 1$); it is denoted by $\lambda / \mu$.
We shall also use the shorthand $\mu \subseteq \lambda$ for the condition $Y\tup{\mu} \subseteq Y\tup{\lambda}$ here.

\item[\textbf{(b)}]
The \emph{skew diagram} $Y\tup{\lambda / \mu}$ of a skew partition $\lambda / \mu$ is defined as the set-theoretic difference $Y\tup{\lambda} \setminus Y\tup{\mu}$ of the Young diagrams of $\lambda$ and $\mu$: the set of boxes that belong to the diagram of $\lambda$ but not to that of $\mu$.

\item[\textbf{(c)}]
Any partition $\lambda$ is identified with the skew partition $\lambda / \varnothing$, where $\varnothing  :=  \tup{0,0,0,\ldots}$ is the \emph{empty partition}.

\end{enumerate}

\end{definition}

We shall now define the Jacobi--Trudi matrix of a skew partition (generalizing \cite[Definition 2.3]{Anzis18}):

\begin{definition}[Jacobi--Trudi matrix; skew Schur function]
\label{Jacobi--Trudi Identity}

\begin{enumerate}

\item[\textbf{(a)}]
Let $\lambda / \mu$ be a skew partition, and let $k = \ell\tup{\lambda}$.
Define the \emph{Jacobi--Trudi matrix} of $\lambda/\mu$ to be the $k \times k$-matrix
\[
J(\lambda/\mu)  :=  \tup{ h_{\lambda_i-\mu_j - i + j} }_{1\leq i\leq k,\ 1\leq j\leq k}
\in \mathcal{P}^{k\times k} .
\]
Here, we set $h_0  :=  1$ and $h_m  :=  0$ for all $m < 0$.

\item[\textbf{(b)}]
The \emph{skew Schur function} of $\lambda/\mu$ is defined to be
\begin{align}
   s_{\lambda/\mu}  :=  \det \tup{ J(\lambda/\mu) } \  \in \mathcal{P} .
\label{eq.def-slm}
\end{align}

\item[\textbf{(c)}]
When $\mu = \varnothing$, we denote the matrix $J(\lambda/\mu)$ and the skew Schur function $s_{\lambda/\mu}$ as $J(\lambda)$ and $s_\lambda$, respectively.

\end{enumerate}

\end{definition}

\begin{remark}
\label{flushremark}
If $\mathcal{P}$ is viewed as the ring of the symmetric functions, then the skew Schur function $s_{\lambda/\mu}$ is often defined combinatorially as follows: 
A \emph{semistandard tableau} of shape $\lambda / \mu$ is a filling of the Young diagram of $\lambda/\mu$ by positive integers which weakly increase along rows and strictly increase down columns.
Then $s_{\lambda/\mu}$ is the sum of monomials
\[
\mathbf{x}_T = \prod_{i=1}^{\infty} x_i^{\text{\# of $i$'s in }T}
\]
over all semistandard tableaux $T$ of shape $\lambda / \mu$.
With this definition, \eqref{eq.def-slm} is a theorem, known as the \emph{first Jacobi--Trudi identity} (see, e.g., \cite[Theorem 7.16.1]{EC2}).
We have chosen to take \eqref{eq.def-slm} as the definition of $s_{\lambda/\mu}$ since we will 
only need the combinatorial description of $s_{\lambda/\mu}$ in a minor way in later proofs.
%
\end{remark}

For example, if $\lambda$ is the partition $(7,4,1)$ as in Example \ref{ydiagexample}, then the Schur function $s_\lambda$ is given by
\begin{align*}
s_{(7,4,1)} =
\det
\begin{pmatrix}
h_7 & h_8 & h_9 \\
h_3 & h_4 & h_5 \\
0 & 1 & h_1 \\
\end{pmatrix} ,
\end{align*}
and the matrix on the right hand side of this equality is $J\tup{\lambda}$.


\begin{remark}
Let
$\lambda = \tup{\lambda_1, \lambda_2, \ldots, \lambda_k}$
be a partition.
Then, the matrix
$J_{\lambda_1, \lambda_2, \ldots, \lambda_k}\tup{z_1, z_2, z_3, \ldots}$
from \S~\ref{sec.introduction} is precisely the matrix
$J\tup{\lambda}$, after each $h_i$ has been specialized to $z_i$.
\end{remark}

\section{\label{sec.mulslant}Multislant matrices and $p$-shifted $n$-staircases}

\subsection{$p$-shifted $n$-staircases}

The first type of partitions that we will study are the
$p$-shifted $n$-staircases.
These are precisely the partitions whose entries form a non-constant
arithmetic progression.
Explicitly, they are defined as follows:

\begin{definition}
Let $p > 0$, $n > 0$ and $k \geq 0$ be integers.
The \emph{$p$-shifted $n$-staircase of length $k$}
is the partition
\[
\lambda = (p+(k-1)n, \ p+(k-2)n, \ \ldots, \ p+n, \  p) .
\]
When $p \leq n$, we call this partition an \emph{inward-shifted $n$-staircase};
when $p > n$, we call it an \emph{outward-shifted $n$-staircase}.

\end{definition}

The name, of course, refers to the shape of the Young diagram.


Anzis et al. \cite{Anzis18} conjectured that a $2$-shifted $2$-staircase $\lambda$ satisfies $P(s_{\lambda} \mapsto 0) = \dfrac{q^2+q -1}{q^3}$. We prove this result and generalize it to arbitrary $n$ and $p$:

\begin{theorem}
\label{thm.n.staircase}

Let $\lambda$ be the $p$-shifted $n$-staircase of
length $k$ (for given $p > 0$, $n > 0$ and $k \geq 0$).

\begin{enumerate}

\item[\textbf{(i)}]
Assume that $k < n+1$ (with $p$ arbitrary). Then,
\[
P(s_{\lambda} \mapsto 0) =
1 -
\begin{cases}
\prod\limits_{i=1}^{k-1} \left(  1-\dfrac{1}{q^{i}}\right) ,
& \text{ if } p \leq k-1; \\
\prod\limits_{i=1}^{k} \left(  1-\dfrac{1}{q^{i}}\right) ,
& \text{ if } p > k-1.
\end{cases}
\]

\item[\textbf{(ii)}]
Assume that $p \leq n$ and $k \geq n+1$. Then,
\[
P(s_{\lambda} \mapsto 0) = 1- \prod_{i=1}^{n} \left(  1-\dfrac{1}{q^{i}}\right) .
\]

\end{enumerate}

\end{theorem}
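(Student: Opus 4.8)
The plan is to obtain Theorem~\ref{thm.n.staircase} as a special case of a general statement about multislant matrices, but the essential ideas can be described directly in terms of the matrix $J(\lambda)$. Write $J_{k,p}$ for the Jacobi--Trudi matrix of the $p$-shifted $n$-staircase of length $k$, so that its $\tup{i,j}$ entry is $h_{p + \tup{k-i}n - i + j}$. The starting point is that two such entries $h_{p + \tup{k-i}n - i + j}$ and $h_{p + \tup{k-i'}n - i' + j'}$ coincide only when $\tup{i'-i}\tup{n+1} = j'-j$; since $\abs{j-j'} \leq k-1$, this is impossible (other than $\tup{i,j}=\tup{i',j'}$) as soon as $k \leq n+1$. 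Combining this with a size estimate on the indices, one finds that for $k \leq n+1$ all entries of $J_{k,p}$ with positive index are pairwise distinct uniform variables; rows $1,\ldots,k-1$ consist entirely of such fresh variables (their indices all exceed $p$), while row $k$ is a string of $0$'s, followed — exactly when $p \leq k-1$ — by an entry $h_0 = 1$ in column $k-p$, followed by the distinct fresh variables $h_1,\ldots,h_p$.

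This disposes of part (i), and also of the base case $k = n+1$ of part (ii). If $p > k-1$, then $J_{k,p}$ is a uniformly random matrix over $\Fq$, so $P(s_\lambda \mapsto 0) = 1 - \prod_{i=1}^k \tup{1-q^{-i}}$. If $p \leq k-1$, I would first permute columns to bring the unique entry $h_0 = 1$ into column $1$ (this changes neither whether the determinant vanishes nor the fact that rows $1,\ldots,k-1$ form a uniformly random $\tup{k-1}\times k$ matrix), then subtract suitable multiples of row $k$ from the other rows to clear the rest of column $1$, and Laplace-expand along column $1$. The remaining $\tup{k-1}\times\tup{k-1}$ matrix has entries of the form (fresh variable) $-$ (fresh variable)$\cdot$(row-$k$ variable), and conditionally on row $k$ and on column $1$ of the first $k-1$ rows its rows are independent and uniform in $\Fq^{k-1}$; hence $P(s_\lambda \mapsto 0) = 1 - \prod_{i=1}^{k-1}\tup{1-q^{-i}}$. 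Taking $k = n+1$ and $p \leq n$ gives the value $1 - \prod_{i=1}^n\tup{1-q^{-i}}$ required for the base case of part (ii).

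For part (ii) with $k \geq n+2$ I would induct on $k$. Deleting the first row and first column of $J_{k,p}$ leaves exactly $J_{k-1,p}$, so it suffices to prove $P(\det J_{k,p} = 0) = P(\det J_{k-1,p} = 0)$. Let $R$ denote the lower $\tup{k-1}\times k$ block of $J_{k,p}$ (rows $2,\ldots,k$); its columns $2,\ldots,k$ form $J_{k-1,p}$. An index computation shows that the $n+1$ entries of row $1$ lying in columns $k-n,\ldots,k$ occur nowhere else in $J_{k,p}$. Conditioning on all other variables and expanding $\det J_{k,p}$ along row $1$,
\[ \det J_{k,p} = c_0 + \sum_{j=k-n}^{k} \tup{-1}^{1+j} h_{p + \tup{k-1}n - 1 + j}\, M_j , \]
where $M_j$ is the maximal minor of $R$ obtained by deleting column $j$, and $c_0$ (the contribution of the remaining, non-fresh, entries of row $1$) and the $M_j$ depend only on the conditioned variables. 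If $\tup{M_{k-n},\ldots,M_k} \neq 0$, then conditionally $\det J_{k,p}$ is a non-constant affine function of the $n+1$ fresh variables and vanishes with probability $q^{-1}$; otherwise $\det J_{k,p} = c_0$ is determined by the conditioning. Writing $E$ for the event $\set{M_j = 0 \text{ for } j = k-n,\ldots,k}$, this gives
\[ P(s_\lambda \mapsto 0) = \tfrac1q\tup{1 - P(E)} + P\tup{E \text{ and } c_0 = 0} , \]
and one must then verify that the right-hand side equals $P(\det J_{k-1,p} = 0)$, which by the inductive hypothesis is $1 - \prod_{i=1}^n\tup{1-q^{-i}}$.

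The main obstacle is exactly this last verification. For $k \leq n+1$ there is no slant overlap and everything behaves generically; but for $k \geq n+2$ the entries of row $1$ in columns $1,\ldots,k-n-1$ reappear lower down in $R$, so $c_0$ and the minors $M_{k-n},\ldots,M_k$ are correlated with the rest of $J_{k-1,p}$ in a way that must be untangled. The right tool is the Toeplitz-block (``slant'') structure of $R$: one uses it to pin down the joint distribution of the family $\tup{M_{k-n},\ldots,M_k;\,c_0}$ relative to $\det J_{k-1,p}$, and then checks that the probabilities combine as claimed. Carrying this out at the level of arbitrary multislant matrices — so that it also recovers the hook, rectangle and staircase cases of \cite{Anzis18} uniformly — is the technical heart of the argument. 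As a structural sanity check one may note that replacing each $h_i$ by the elementary symmetric function $e_i$ is a unitriangular, hence bijective, change of variables, so that $P(s_\lambda \mapsto 0) = P(s_{\lambda^t} \mapsto 0)$ and the same value should emerge from the conjugate shape as well.
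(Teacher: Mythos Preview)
Your handling of part \textbf{(i)} is correct and essentially matches the paper's remark that this case is elementary because all indeterminates in $J(\lambda)$ are distinct when $k\leq n+1$.

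For part \textbf{(ii)}, however, your induction on $k$ via expansion along the \emph{top} row does not close, and you correctly identify the obstruction yourself: the first $k-n-1$ entries of row $1$ reappear in the lower block $R$, so the minors $M_{k-n},\ldots,M_k$ and the constant term $c_0$ are genuinely correlated. In fact, on the event $E=\set{M_{k-n}=\cdots=M_k=0}$ with $\operatorname{rank} R=k-1$, the remaining minors $M_1,\ldots,M_{k-n-1}$ are typically nonzero (up to sign they are the coordinates of a kernel vector of $R$), so $c_0$ does not vanish automatically on $E$. Completing your identity therefore requires computing $P(E)$ and $P\tup{E\cap\set{c_0=0}}$ separately, which is exactly the joint analysis you defer to ``the multislant framework''---but you never carry that framework out, so as written the argument for \textbf{(ii)} is a plan with an acknowledged gap, not a proof.

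The paper avoids this correlation trap by working from the \emph{bottom} of the matrix rather than the top. After permuting columns, $J(\lambda)$ becomes a multislant matrix with $n+1$ disjoint slant blocks, of signature $\tup{p,\,n-p,\,1}$. The induction is then on the size of the matrix, with an inner strong induction on $2i+\ell$ (where $\tup{i,j,\ell}$ is the signature). Two reductions drive it: a block of type X is resolved by averaging over the value of its bottom element (turning it into type 0 or type 1); and whenever two type-1 blocks are present, column operations between them convert the narrower one to type 0 without changing the singular probability. Once the signature is $\tup{0,\,k-1,\,1}$, the bottom row of the whole matrix has a \emph{single} nonzero entry---the bottom element of the lone type-1 block---so Laplace expansion along that row reduces the size by one with no correlation issue whatsoever, and the remaining blocks of type 0 become blocks of type X in the smaller matrix. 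This is the move your outline is missing: the reduction should peel off the bottom row, where the slant structure forces almost every entry to be zero, rather than the top row, where the repeated indeterminates accumulate.
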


\begin{conjecture}
\label{bigoutwardstaircases}
Let $\lambda$ be the $p$-shifted $n$-staircase of length
$k \geq n+1$ with $p > n$.  Then,
\[
P(s_{\lambda} \mapsto 0) = 1-\prod_{i=1}^{n+1} \left(  1-\dfrac{1}{q^{i}}\right).
\]
\end{conjecture}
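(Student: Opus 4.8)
The plan is to prove the stronger statement that, for $p>n$ and $k\geq n+1$, the Schur function $s_\lambda$ of the $p$-shifted $n$-staircase $\lambda$ of length $k$ is equidistributed over $\Fq$ with the determinant of a uniformly random $(n+1)\times(n+1)$-matrix over $\Fq$; since this distribution assigns probability $1-\prod_{i=1}^{n+1}\tup{1-\frac{1}{q^i}}$ to the value $0$, it implies the conjecture. I would argue by induction on $k$.

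\emph{Base case $k=n+1$.} Here each row of $J(\lambda)$ is a length-$(n+1)$ ``window'' into the sequence $\tup{h_m}_m$, consecutive windows being shifted by $n+1$; as there are $n+1$ windows, they tile an interval of $(n+1)^2$ consecutive indices, and since $p>n$ the smallest index that occurs, namely $\lambda_k-k+1=p-n$, is $\geq 1$. Thus every entry of $J(\lambda)$ is a distinct indeterminate $h_i$ with $i>0$, so $J(\lambda)$ is literally a uniformly random $(n+1)\times(n+1)$-matrix, and the claim holds.

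\emph{Inductive step.} Let $\lambda'$ be the $p$-shifted $n$-staircase of length $k+1$ (with $k\geq n+1$) and $\lambda$ the one of length $k$. Comparing windows shows that
\[
J(\lambda')=\begin{pmatrix} h_{p+kn} & r^{T}\\ v & J(\lambda)\end{pmatrix},
\qquad r=\tup{h_{p+kn+1},\ \ldots,\ h_{p+kn+k}},
\qquad v=\tup{h_{\lambda_i-i}}_{1\leq i\leq k},
\]
and that the last $n+1$ entries of $r$ are \emph{fresh}: they occur neither in $v$ nor in $J(\lambda)$. Using the identity $\det\left(\begin{smallmatrix} a & r^{T}\\ v & M\end{smallmatrix}\right)=a\det M-r^{T}\operatorname{adj}(M)\,v$ together with $\bigl(\operatorname{adj}(J(\lambda))\,v\bigr)_i=\tup{-1}^{i-1}\,s_{\lambda/(1^i)}$ (which follows by identifying the relevant maximal minor of the $k\times(k+1)$-matrix $\bigl(h_{\lambda_a-a+c}\bigr)_{1\leq a\leq k,\ 0\leq c\leq k}=[\,v\mid J(\lambda)\,]$ with a skew Jacobi--Trudi determinant), we obtain
\[
s_{\lambda'}=h_{p+kn}\,s_\lambda\ -\ \sum_{i=1}^{k}\tup{-1}^{i-1}\,r_i\,s_{\lambda/(1^i)}\ =\ D'-x\cdot\widetilde y,
\]
where $x\in\Fq^{n+1}$ collects the fresh variables $r_{k-n},\ldots,r_k$, the vector $\widetilde y\in\Fq^{n+1}$ collects the corresponding $\tup{-1}^{i-1}s_{\lambda/(1^i)}$ for $i=k-n,\ldots,k$ (note that $s_{\lambda/(1^k)}$ is itself the Schur function of the $(p-1)$-shifted $n$-staircase of length $k$), and $D'$ does not involve $x$. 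Conditioning on all remaining variables: on the event $\set{\widetilde y\neq 0}$ the value $s_{\lambda'}$ is uniform on $\Fq$, while on $\set{\widetilde y=0}$ it equals $D'$.

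\emph{The obstacle.} One must show that these two pieces reassemble to exactly the generic $(n+1)\times(n+1)$-determinant distribution, and this is the delicate point. The $(n+1)\times(n+1)$ determinant, expanded along a row, is itself uniform on the event ``the complementary $n\times n$ minor is nonsingular'', a product of two independent uniform variables on ``that minor has rank $n-1$'', and $0$ otherwise; so one must match $P\tup{\widetilde y\neq 0}$, and the conditional law of $D'$ on $\set{\widetilde y=0}$, against these. Small cases already show that \emph{neither} $P\tup{\widetilde y\neq 0}$ \emph{nor} the conditional law of $D'$ is independent of $p$ (although their combination is), so there is no shortcut of the form ``$D'$ vanishes identically when $\widetilde y=0$''. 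What seems to be needed is a joint-distribution strengthening of the inductive hypothesis, controlling $s_\lambda$ simultaneously with the bounded family $\{s_{\lambda/(1^i)}\}_{i}$ of nearby skew Schur functions --- themselves multislant determinants --- after which one verifies that the probabilities recombine correctly. This is exactly the kind of statement the multislant-matrix machinery of this section is designed to deliver, and I expect the cleanest route is to extract it by reproving the relevant multislant theorem with its hypothesis on ``boundary $1$'s'' relaxed so as to permit the extreme block to lie entirely among positive indices --- which is precisely the $p>n$ regime at the base of the induction. (The same scheme, run with $p\leq n$, recovers Theorem~\ref{thm.n.staircase}(ii): at length $n+1$ the extreme window then contains the entry $h_0=1$, and clearing its row leaves an $n\times n$ generic determinant, which is the source of the missing factor.) Carrying out the joint strengthening and the bookkeeping of these probabilities is where the real work lies.
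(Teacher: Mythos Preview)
This statement is Conjecture~\ref{bigoutwardstaircases} in the paper, and the paper does \emph{not} prove it; it is explicitly left open. There is therefore no proof in the paper to compare yours against.

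Your proposal is not a proof either, and you say so yourself. The base case $k=n+1$ is correct (with $p>n$ every entry of $J(\lambda)$ is a distinct $h_m$ with $m>0$, so the matrix is a generic $(n{+}1)\times(n{+}1)$ matrix), and the recursive expansion $s_{\lambda'}=D'-x\cdot\widetilde y$ with the last $n+1$ top-row entries fresh is set up correctly. But the inductive step breaks down exactly where you flag it: the hypothesis gives only the marginal law of $s_\lambda$, whereas the step needs the joint law of $D'$ and the vector $\widetilde y=\bigl((-1)^{i-1}s_{\lambda/(1^i)}\bigr)_{k-n\le i\le k}$. You propose no concrete strengthened hypothesis and no mechanism to propagate one; your own observation that neither $P(\widetilde y\ne 0)$ nor the conditional law of $D'$ on $\{\widetilde y=0\}$ is $p$-independent confirms that this is not a formality that can be filled in later.

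On your closing suggestion to adapt the multislant machinery: for $p>n$ the column blocks obtained by sorting modulo $n+1$ genuinely have nonzero basement entries (already for $n=1$, $p=2$, $k=3$ one block is $\left(\begin{smallmatrix}h_4&h_6\\h_2&h_4\\1&h_2\end{smallmatrix}\right)$ with basement entry $1$), so Definition~\ref{def.slant} is violated, and Lemma~\ref{lem.multislant.red01}, which relies on the last row having a single nonzero entry, has no direct analogue. A proof of the conjecture would have to supply a genuinely new reduction at this point; nothing in your outline does so.
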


Theorem \ref{thm.n.staircase} and Conjecture \ref{bigoutwardstaircases} together encompass the whole class of $p$-shifted $n$-staircases. We note that none of the four expressions for $P(s_{\lambda} \mapsto 0)$ depend on $p$.
Theorem \ref{thm.n.staircase} \textbf{(ii)} restates Theorem \ref{thm.n.staircase-intro} from the introduction.

The right hand side of Theorem~\ref{thm.n.staircase} \textbf{(ii)}
is the probability of a random $n\times n$-matrix being singular over $\mathbb{F}_q$.

We shall prove Theorem~\ref{thm.n.staircase} \textbf{(ii)} in Subsection \ref{subsec.proof-shifted-staircase}. This proof will rely on a more general result about a certain kind of structured matrices (consisting of rectangular Toeplitz blocks glued together along their vertical sides), which we call \emph{multislant matrices}, and which are defined in Definition~\ref{def.multislant} below. These matrices exhibit a well-behaved recursive structure, which allows us to explicitly compute the probability for their determinants to vanish (Theorem~\ref{thm.multislant.prob}). Having done that, we will be able to prove Theorem~\ref{thm.n.staircase} \textbf{(ii)} by identifying the appropriate Jacobi--Trudi matrix $J\tup{\lambda}$ as a multislant matrix after permuting its columns (Lemma~\ref{multistair}).

Theorem~\ref{thm.n.staircase} \textbf{(i)} can be proved similarly,
but is also fairly easy to check directly, since all indeterminates
in the matrix $J\tup{\lambda}$ are distinct (and all entries of this
matrix are indeterminates, except for some possible $0$'s and $1$'s
in the last row).

We have so far been unable to adapt this type of reasoning to Conjecture~\ref{bigoutwardstaircases}; it appears to require a looser notion of ``multislant matrices'', which no longer lends itself to an easy computation of the vanishing probability.

\subsection{\label{subsec.multislant.multislant}Multislant matrices}

We let $\mathbb{N}$ denote the set $\left\{  0,1,2,\ldots\right\}  $. We let
$\left[  n\right]  =\left\{  1,2,\ldots,n\right\}  $ for each $n\in\mathbb{N}$.

All matrices appearing in the following are over $\Fq$ or over
polynomial rings over $\Fq$.



\begin{definition}
Let $A_{1},A_{2},\ldots,A_{k}$ be finitely many matrices with the same number
of rows. Then, $\left(  A_{1}\mid A_{2}\mid\cdots\mid A_{k}\right)  $ shall
denote the block matrix whose blocks are $A_{1},A_{2},\ldots,A_{k}$, arranged
horizontally from left to right.
\end{definition}

For example, if $A_{1}=\left(
\begin{array}
[c]{cc}%
a & b\\
c & d
\end{array}
\right)  $ and $A_{2}=\left(
\begin{array}
[c]{c}%
e\\
f
\end{array}
\right)  $ and $A_{3}=\left(
\begin{array}
[c]{cc}%
g & h\\
i & j
\end{array}
\right)  $, then $\left(  A_{1}\mid A_{2}\mid A_{3}\right)  =\left(
\begin{array}
[c]{ccccc}%
a & b & e & g & h\\
c & d & f & i & j
\end{array}
\right)  $.

In the following, an empty box in a matrix is always understood to be filled
with zero. For example, $\left(
\begin{array}
[c]{ccc}%
1 &  & \\
2 & 3 & \\
& 4 & 5
\end{array}
\right)  $ means the $3\times3$-matrix $\left(
\begin{array}
[c]{ccc}%
1 & 0 & 0\\
2 & 3 & 0\\
0 & 4 & 5
\end{array}
\right)  $.

\begin{definition}
A $u\times v$-matrix is said to be \emph{tall} if $u\geq v$.
\end{definition}

We shall now define a notion of ``Toeplitz matrices'' tailored to
our needs. (In the literature, Toeplitz matrices are usually square
and sometimes infinite, whereas ours will be finite but usually
non-square. Apart from these differences, our notion agrees with
the standard one.)

\begin{definition}
Let $A=\left(  a_{i,j}\right)  _{1\leq i\leq u,\ 1\leq j\leq v}$ be a $u\times
v$-matrix.

\begin{enumerate}

\item[\textbf{(a)}]

For each $k\in\mathbb{Z}$, the \emph{$k$-th paradiagonal} of $A$
will mean the list of all entries $a_{i,j}$ of $A$ with $i-j=k$. (These
entries are listed in the order of increasing $i$, or, equivalently, in the
order of increasing $j$.)

For example, the nonempty paradiagonals of the matrix $\left(
\begin{array}
[c]{cc}%
a & b\\
c & d\\
e & f
\end{array}
\right)  $ are%
\[
\underbrace{\left(  b\right)  }_{\left(  -1\right)  \text{-st paradiagonal}%
},\ \underbrace{\left(  a,d\right)  }_{0\text{-th paradiagonal}}%
,\ \underbrace{\left(  c,f\right)  }_{1\text{-st paradiagonal}}%
,\ \underbrace{\left(  e\right)  }_{2\text{-nd paradiagonal}}.
\]

\item[\textbf{(b)}]
A paradiagonal of $A$ is said to be \emph{full} if it has $v$ entries.

For example, the full paradiagonals of the matrix $\left(
\begin{array}
[c]{cc}%
a & b\\
c & d\\
e & f
\end{array}
\right)  $ are $\left(  a,d\right)  $ and $\left(  c,f\right)  $. Note that a
matrix that is not tall does not have any full paradiagonals.

\item[\textbf{(c)}]
If $u\geq v$, then the $\left(  u-v\right)  $-th paradiagonal of
a $u\times v$-matrix $A$ will also be called the \emph{bottommost full
paradiagonal} of $A$. (It is indeed full and is indeed the bottommost of the
full paradiagonals of $A$.)

\item[\textbf{(d)}]
The matrix $A$ is said to be \emph{Toeplitz} if each of its
paradiagonals consists of equal entries (i.e., if any two entries that belong
to the same paradiagonal are equal).

For example, a $4\times3$-matrix is Toeplitz if and only if it has the form
$\left(
\begin{array}
[c]{ccc}%
c & b & a\\
d & c & b\\
e & d & c\\
f & e & d
\end{array}
\right)  $ for some $a,b,c,d,e,f$.

\item[\textbf{(e)}]
The entries $a_{i,j}$ of $A$ with $i<j$ will be called the
\emph{attic entries} of $A$. For example, the only attic entry of the matrix
$\left(
\begin{array}
[c]{cc}%
a & b\\
c & d\\
e & f
\end{array}
\right)  $ is $b$.

\item[\textbf{(f)}]
The entries $a_{i,j}$ of $A$ will $i>j+u-v$ will be called the
\emph{basement entries} of $A$. For example, the only basement entry of the
matrix $\left(
\begin{array}
[c]{cc}%
a & b\\
c & d\\
e & f
\end{array}
\right)  $ is $e$.

\end{enumerate}

\end{definition}

In a tall matrix, the attic entries are the entries above the topmost
full paradiagonal, whereas the basement entries are the entries below
the bottommost full paradiagonal.

\begin{definition}
\label{def.slant} A \emph{slant matrix} means a tall Toeplitz matrix of the
form%
\[
\left(
\begin{array}
[c]{cccc}%
u_{m} & \ast & \ast & \ast\\
\vdots & \ddots & \vdots & \vdots\\
u_{2} & \ddots & u_{m} & \ast\\
u_{1} & \ddots & \vdots & u_{m}\\
u_{0} & \ddots & u_{2} & \vdots\\
& \ddots & u_{1} & u_{2}\\
&  & u_{0} & u_{1}\\
&  &  & u_{0}%
\end{array}
\right)  ,
\]
where

\begin{itemize}
\item each asterisk (``$\ast$'') is an indeterminate or an element of $\Fq$;

\item $u_{1},u_{2},\ldots,u_{m}$ are $m$ distinct indeterminates; and

\item $u_{0}$ is either an indeterminate distinct from $u_{1},u_{2}%
,\ldots,u_{m}$ or an element of $\Fq$;

\item none of the indeterminates among the attic entries appears on any
full paradiagonal.
\end{itemize}

We require $m\geq0$, and we require that the matrix have at least one column.

We say that the slant matrix shown above has \emph{type X} if $u_{0}$ is an
indeterminate; we say that it has \emph{type 0} if $u_{0}=0$; we say that it
has \emph{type 1} if $u_{0}\in \Fq\setminus\left\{  0\right\}  $.
\end{definition}

Thus, in a slant matrix,

\begin{itemize}
\item all attic entries are indeterminates or elements of $\Fq$,
and are equal along each paradiagonal (but can differ between different paradiagonals);

\item all basement entries are $0$;

\item each paradiagonal is constant (i.e., any two entries lying on the same
paradiagonal are equal);

\item the (equal) entries on each full paradiagonal are indeterminates, except
possibly on the bottommost full paradiagonal, whose entry can also be an
element of $\Fq$;

\item indeterminates on any two distinct paradiagonals are distinct, except
possibly if both of them are in the attic (i.e., on non-full paradiagonals).
\end{itemize}

For example, the matrices%
\[
\left(
\begin{array}
[c]{ccc}%
z & 2 & w\\
y & z & 2\\
x & y & z\\
& x & y\\
&  & x
\end{array}
\right)  ,\ \ \ \left(
\begin{array}
[c]{cc}%
z & 4\\
y & z\\
0 & y\\
& 0
\end{array}
\right)  ,\ \ \ \left(
\begin{array}
[c]{cccc}%
z & 3 & 1 & w\\
y & z & 3 & 1\\
4 & y & z & 3\\
& 4 & y & z\\
&  & 4 & y\\
&  &  & 4
\end{array}
\right)
\]
(where $x,y,z,w$ are four distinct indeterminates) are slant matrices of type
X, type 0 and type 1, respectively (assuming that $4\neq0$ in $\Fq$).

\begin{definition}
Two slant matrices are said to be \emph{disjoint} if there is no indeterminate
that appears in both of them.
\end{definition}

For example, the slant matrices $\left(
\begin{array}
[c]{cc}%
z & 3\\
y & z\\
0 & y\\
& 0
\end{array}
\right)  $ and $\left(
\begin{array}
[c]{cc}%
w & 3\\
x & w\\
0 & x\\
& 0
\end{array}
\right)  $ are disjoint (where $x,y,z,w$ are four distinct indeterminates),
whereas the slant matrices $\left(
\begin{array}
[c]{cc}%
z & 0\\
y & z\\
0 & y\\
& 0
\end{array}
\right)  $ and $\left(
\begin{array}
[c]{cc}%
z & 2\\
x & z\\
0 & x\\
& 0
\end{array}
\right)  $ are not.

\begin{definition}
\label{def.multislant} A \emph{multislant matrix} means a matrix $M$ of the
form $\left(  A_{1}\mid A_{2}\mid\cdots\mid A_{k}\right)  $, where
$A_{1},A_{2},\ldots,A_{k}$ are pairwise disjoint slant matrices with the same
number of rows. In this case, the slant matrices $A_{1},A_{2},\ldots,A_{k}$
are called the \emph{blocks} of $M$. Moreover, the \emph{signature} of $M$ is
defined to be the triple $\left(  i,j,\ell\right)  $, where

\begin{itemize}
\item $i$ is the number of blocks of $M$ that have type X,

\item $j$ is the number of blocks of $M$ that have type 0, and

\item $\ell$ is the number of blocks of $M$ that have type 1.
\end{itemize}

\noindent(Of course, $i+j+\ell=k$ in this case.)
\end{definition}

For example, the matrix%
\[
\left(
\begin{array}
[c]{ccccccccccccc}%
z & 1 & 0 & p & w & 3 & 3 & e & 0 & t & 3 & 4 & 9\\
y & z & 1 & 0 & v & w & 3 & d & e & s & t & 3 & 4\\
0 & y & z & 1 & u & v & w & c & d & 3 & s & t & 3\\
& 0 & y & z & 1 & u & v & b & c &  & 3 & s & t\\
&  & 0 & y &  & 1 & u & a & b &  &  & 3 & s\\
&  &  & 0 &  &  & 1 &  & a &  &  &  & 3
\end{array}
\right)
\]
(where all letters are distinct indeterminates) is a multislant matrix with
four blocks. The signature of this multislant matrix is $\left(  1,1,2\right)
$ if $3\neq0$ in $\Fq$ (since it has $1$ block of type X, $1$ block
of type 0 and $2$ blocks of type 1), and is $\left(  1,2,1\right)  $ if $3=0$
in $\Fq$.

Note that the (empty) $0\times0$-matrix is a multislant matrix (with $0$
blocks and signature $\left(  0,0,0\right)  $), but not a slant matrix. We
recall that the determinant of this $0\times0$-matrix is $1$ (by definition).

\begin{definition}
\label{def.SiPr}Let $M$ be a multislant matrix that is square. The
\emph{singular probability} of $M$ is defined to be the probability that $\det
M$ becomes $0$ if we substitute a random element of $\Fq$ for each
of the indeterminates appearing in $M$. (Here, the random elements of
$\Fq$ are meant to be chosen uniformly and independently.) The
singular probability of $M$ will be denoted by $\operatorname*{SiPr}M$.
\end{definition}

For example, if $M$ is the multislant matrix $\left(
\begin{array}
[c]{ccc}%
b & 3 & y\\
a & b & x\\
& a & 1
\end{array}
\right)  $ (for four distinct indeterminates $a,b,x,y$), then the singular
probability $\operatorname*{SiPr}M$ of $M$ is the probability that four
(uniformly and independently) random elements $\alpha,\beta,\xi,\theta$ of
$\Fq$ satisfy $\det\left(
\begin{array}
[c]{ccc}%
\beta & 3 & \theta\\
\alpha & \beta & \xi\\
& \alpha & 1
\end{array}
\right)  =0$. It turns out that this probability is precisely $\dfrac{1}{q}$.
More generally, we shall show a formula for the singular probability of any
multislant matrix.

First, we need a notation:

\begin{definition}
\label{def.gammak}For each positive integer $k$, we set%
\[
\gamma_{k} := \left(  1-\dfrac{1}{q^{k-1}}\right)  \left(  1-\dfrac{1}{q^{k-2}%
}\right)  \cdots\left(  1-\dfrac{1}{q^{1}}\right)  .
\]
Thus, in particular, $\gamma_{1}=1$ and $\gamma_{2}=1-\dfrac{1}{q}$. We also
set $\gamma_{0}=1$.
\end{definition}

It is well-known that $\gamma_{k}=\left\vert \operatorname*{GL}\nolimits_{k-1}%
\left(  \Fq\right)  \right\vert /\left\vert \operatorname*{M}%
\nolimits_{k-1}\left(  \Fq\right)  \right\vert $ for every positive
integer $k$. (But this will actually be a particular case of Theorem
\ref{thm.multislant.prob} below.)

Note that using the notations of $q$-calculus (specifically, the
$q$-Pochhammer symbol $\tup{a;\ q}_n$ as defined, e.g., in
\cite[\S 1.2]{Johnson2020}), we can rewrite the definition of $\gamma_k$ as
$\gamma_k = \tup{1/q;\ 1/q}_{k-1}$.

Now, we claim the following:

\begin{theorem}
\label{thm.multislant.prob}Let $M$ be a multislant matrix that is square and
nontrivial. Let $\left(  i,j,\ell\right)  $ be the signature of $M$, and let
$k=i+j+\ell$ be the number of blocks of $M$. Then,%
\begin{equation}
\operatorname*{SiPr}M=1-\gamma_{k}\left(  1-\dfrac{0^{\ell}}{q^{i}}\right)  .
\label{eq.thm.multislant.prob.eq}%
\end{equation}

\end{theorem}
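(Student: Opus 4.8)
I would prove the theorem in two parts: first show that the general case reduces to the case $i=0$ (no type-X blocks), and then establish the case $i=0$.

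\textbf{Reduction to $i=0$.} Assume the theorem is known when the signature has first coordinate $0$, and let $M$ have signature $\left(i,j,\ell\right)$ with $i\geq1$. Each type-X block of $M$ carries a distinguished indeterminate, its entry $u_{0}$ on the bottommost full paradiagonal; by the slant-matrix axioms (attic indeterminates never lie on a full paradiagonal, and distinct full paradiagonals carry distinct indeterminates) together with disjointness of the blocks, this indeterminate occurs nowhere else in $M$. Conditioning on the $i$ field values substituted for these indeterminates, each choice $\vec{v}\in\Fq^{i}$ turns $M$ into a multislant matrix $M[\vec{v}]$ with the same $k$ and no type-X block --- each former type-X block having become type $0$ or type $1$ according to whether its value is $0$ --- so that $M[\vec{v}]$ has $\ell$ plus the number of nonzero entries of $\vec{v}$ blocks of type $1$. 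Then
\[
\operatorname{SiPr} M \;=\; \frac{1}{q^{i}}\sum_{\vec{v}\in\Fq^{i}}\operatorname{SiPr} M[\vec{v}] ,
\]
and by the $i=0$ case each summand is $1-\gamma_{k}$ unless $M[\vec{v}]$ has no type-$1$ block --- which happens only for $\ell=0$ and $\vec{v}=\vec{0}$, in which case the summand is $1$. Evaluating the sum (all summands $1-\gamma_{k}$ if $\ell\geq1$; exactly one summand $1$ and the other $q^{i}-1$ equal to $1-\gamma_{k}$ if $\ell=0$) gives precisely the right-hand side of \eqref{eq.thm.multislant.prob.eq}.

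\textbf{The case $i=0$.} If in addition $\ell=0$, then every block is type $0$, so the last row of $M$ --- built within each block from basement entries and the zero entry on the bottommost full paradiagonal --- is identically zero; hence $\det M\equiv0$ and $\operatorname{SiPr} M=1$, as \eqref{eq.thm.multislant.prob.eq} asserts. So assume $\ell\geq1$; then the last row of $M$ is the \emph{fixed} vector $v$ carrying the constant $u_{0}$ of each block in that block's last column and zeros elsewhere, and $v\neq0$. I claim $\operatorname{SiPr} M=1-\gamma_{k}$, proved by induction on $N-k$, where $N$ is the number of columns. For the base case $N=k$: every block is a single column, so the top $k-1$ rows of $M$ are $\left(k-1\right)k$ pairwise distinct indeterminates, which therefore specialize to a uniformly random $\left(k-1\right)\times k$ matrix $R$ over $\Fq$ independent of the fixed bottom row $v$; hence $\det M\neq0$ exactly when $\operatorname{rank}R=k-1$ and $v$ lies outside the row space of $R$. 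Conditioned on $\operatorname{rank}R=k-1$, the row space of $R$ is a uniformly random hyperplane of $\Fq^{k}$ (each arising as the row space of exactly $\left\vert\operatorname{GL}_{k-1}\!\left(\Fq\right)\right\vert$ full-rank matrices), so the conditional probability that the fixed nonzero $v$ avoids it is $\dfrac{q^{k-1}\left(q-1\right)}{q^{k}-1}$; multiplying by $\Pr\!\left[\operatorname{rank}R=k-1\right]=\prod_{r=2}^{k}\left(1-q^{-r}\right)$ and simplifying gives $\Pr\!\left[\det M\neq0\right]=\prod_{r=1}^{k-1}\left(1-q^{-r}\right)=\gamma_{k}$, as wanted.

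\textbf{The inductive step and the main obstacle.} When $N>k$, some block of $M$ has at least two columns. The plan is to perform a sequence of operations on $M$, each leaving the probability that $\det M=0$ unchanged (elementary row and column operations, plus cofactor expansion along a row or column that has been cleared to a single nonzero element of $\Fq$), transforming $M$ into a multislant matrix $M'$ with the same number $k$ of blocks, the same signature $\left(0,j,\ell\right)$, and one column fewer; then $\operatorname{SiPr} M=\operatorname{SiPr} M'=1-\gamma_{k}$ by the induction hypothesis (since $N'-k<N-k$). One natural candidate for $M'$ is $M$ with its top row and the leftmost column of some multi-column block deleted, which can be checked to be again multislant with the same signature. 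The hard part --- and the step I expect to carry essentially all the weight of the argument --- is to show that the chosen reduction really does preserve the singular probability, and that the output $M'$ genuinely obeys every axiom of a multislant matrix: the Toeplitz form of each block, the attic/basement placement of its entries, the distinctness of the full-paradiagonal indeterminates inside each block, and the pairwise disjointness of the blocks.
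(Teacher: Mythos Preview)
Your reduction to $i=0$ (conditioning on all type-X bottom elements at once) and your handling of the subcase $\ell=0$ are correct; they amount to the paper's Cases~1 and~4, with the conditioning batched rather than done one indeterminate at a time. Your base case $N=k$ is also correct and is a pleasant direct computation that the paper does not isolate.

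The genuine gap is the inductive step for $i=0$, $\ell\geq1$, $N>k$. The candidate $M'$ you name (delete the top row of $M$ and the leftmost column of some multi-column block) is indeed again a multislant matrix of the same signature, but you give no mechanism tying $\operatorname{SiPr}M$ to $\operatorname{SiPr}M'$. The top row of $M$ is not amenable to Laplace expansion or to conditioning: its entry in the first column of each block $A_q$ is the indeterminate $u_{m_q}$ on that block's topmost full paradiagonal, and this same indeterminate recurs further down the block (at positions $(2,2),(3,3),\ldots$ of $A_q$). So the top row is neither a row of fresh independent variables nor one with a single nonzero entry, and there is no cofactor or substitution argument that turns $\det M$ into a nonzero scalar times $\det M'$. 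The equality $\operatorname{SiPr}M=\operatorname{SiPr}M'$ that you want is in fact a \emph{consequence} of the theorem, not a lemma toward it; you correctly flag this step as ``the hard part,'' but as written it is simply missing.

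The paper avoids this difficulty by working with the \emph{bottom} row, where the structure is clean. In the regime $i=0$, $\ell\geq1$ it uses two moves: (a) if $\ell\geq2$, subtract scaled columns of a wider type-1 block from a narrower type-1 block to convert the latter to type~0 (Lemma~\ref{lem.multislant.red11}), which keeps the size and $k$ fixed but decreases $\ell$; (b) once $\ell=1$, the bottom row has a single nonzero constant entry, so Laplace expansion along it (Lemma~\ref{lem.multislant.red01}) drops the size by one --- at the price of turning all $k-1$ former type-0 blocks into type~X. Because step~(b) reintroduces type-X blocks, the paper is forced into a double induction (outer on the size of $M$, inner on $2i+\ell$). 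Your attempt to remain inside $i=0$ via a single induction on $N-k$ breaks here: after (b) followed by your conditioning step, the resulting $i=0$ matrices have $N-k$ decreased by one when the unique type-1 block had width $\geq2$, but have the \emph{same} $N-k$ (with $k$ decreased) when that block had width~$1$. If you want to salvage your outline, switch to inducting on $N$ rather than on $N-k$, and take the composite ``Lemma~\ref{lem.multislant.red11} $\to$ Lemma~\ref{lem.multislant.red01} $\to$ recondition'' as your reduction step; this essentially reconstructs the paper's argument.
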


Of course,
\[
0^{\ell}=%
\begin{cases}
0, & \text{if }\ell>0;\\
1, & \text{if }\ell=0.
\end{cases}
\]
Thus, (\ref{eq.thm.multislant.prob.eq}) can be restated as follows:

\begin{itemize}
\item If $\ell>0$ (that is, if $M$ has at least one block of type 1), then
$\operatorname*{SiPr}M=1-\gamma_{k}$.

\item If $\ell=0$ (that is, if $M$ has no block of type 1), then
$\operatorname*{SiPr}M=1-\gamma_{k}\left(  1-\dfrac{1}{q^{i}}\right)  $.
\end{itemize}

Theorem \ref{thm.multislant.prob} is proved by induction, based on a few
lemmas. First, we need some more notations:

\begin{definition}
Let $A$ be a slant matrix. Let $u_{0}$ be the bottommost entry in the last
column of $A$. Thus, $u_{0}$ is either an indeterminate (if $A$ has type X) or
$0$ (if $A$ has type 0) or a nonzero element of $\Fq$ (if $A$ has
type 1). Moreover, the bottommost full paradiagonal of $A$ is $\left(
u_{0},u_{0},\ldots,u_{0}\right)  $.

\begin{enumerate}

\item[\textbf{(a)}]
We call $u_{0}$ the \emph{bottom element} of $A$.

\item[\textbf{(b)}]
For any $v\in\Fq$, we let $A^{\rightarrow v}$ denote
the result of replacing all entries on the bottommost full paradiagonal of $A$
by $v$. Thus, $A^{\rightarrow v}$ is a slant matrix of type 0 if $v=0$, and
otherwise is a slant matrix of type 1.

\end{enumerate}

\end{definition}

For example, if $A=\left(
\begin{array}
[c]{ccc}%
z & 3 & 2\\
y & z & 3\\
x & y & z\\
& x & y\\
&  & x
\end{array}
\right)  $, then the bottom element of $A$ is the indeterminate $x$, and we
have%
\[
A^{\rightarrow0}=\left(
\begin{array}
[c]{ccc}%
z & 3 & 2\\
y & z & 3\\
0 & y & z\\
& 0 & y\\
&  & 0
\end{array}
\right)  \ \ \ \ \ \ \ \ \ \ \text{and}\ \ \ \ \ \ \ \ \ \ A^{\rightarrow
5}=\left(
\begin{array}
[c]{ccc}%
z & 3 & 2\\
y & z & 3\\
5 & y & z\\
& 5 & y\\
&  & 5
\end{array}
\right)  .
\]

\begin{definition}

\begin{enumerate}

\item[\textbf{(a)}]
A slant matrix is said to be \emph{strict} if all its attic
entries are elements of $\Fq$ (rather than indeterminates).

\item[\textbf{(b)}]
A multislant matrix is said to be \emph{strict} if all its blocks
are strict.

\end{enumerate}

\end{definition}

For example, the multislant matrix $\left(
\begin{array}
[c]{cccccc}%
z & t & 2 & v & 1 & s\\
y & z & t & u & v & r\\
x & y & z & w & u & q\\
& x & y & 1 & w & p\\
&  & x &  & 1 & 0
\end{array}
\right)  $ is not strict, since its first block has the indeterminate $t$ in
its attic; replacing this indeterminate $t$ by any constant element of
$\Fq$ would make this matrix strict.

Now, we can state the lemmas that will help us prove Theorem
\ref{thm.multislant.prob} by induction:

\begin{lemma}
\label{lem.multislant.red11}Let $M$ be a strict multislant matrix that is
square. Assume that $M$ has at least two blocks of type 1. Let $A_{i}$ and
$A_{j}$ be two blocks of $M$ that have type 1, with $i\neq j$. Assume that the
block $A_{i}$ has at least as many columns as the block $A_{j}$. Then, there
is a strict multislant matrix $M^{\prime}$ with the following properties:

\begin{itemize}
\item The matrix $M^{\prime}$ is square and has the same size as $M$.

\item It satisfies $\operatorname*{SiPr}M=\operatorname*{SiPr}\left(
M^{\prime}\right)  $.

\item The matrix $M^{\prime}$ differs from $M$ only in the block $A_{j}$ being
replaced by a new block, which has type 0.
\end{itemize}
\end{lemma}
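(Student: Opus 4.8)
The plan is to pass from $M$ to $M'$ in two steps: first a sequence of elementary column operations with coefficients in $\Fq$ (which leave $\det M$ literally unchanged, hence preserve $\operatorname{SiPr}$), and then a unitriangular substitution of the indeterminates of the $j$-th block (an $\Fq$-algebra automorphism of the ambient polynomial ring, hence a bijection of the evaluation space $\Fq^{N}$, which again preserves $\operatorname{SiPr}$). Throughout, write $u$ for the common number of rows of the blocks of $M$ and $v_\ell$ for the number of columns of $A_\ell$; since $M$ is square, $u=\sum_\ell v_\ell$, each block $A_\ell$ is tall of size $u\times v_\ell$, and its full paradiagonals are exactly the ones of index $0,1,\dots,m_\ell$ with $m_\ell:=u-v_\ell\geq 0$. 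Let $a^{(\ell)}_d$ denote the constant entry along the $d$-th paradiagonal of $A_\ell$, and $u^{(\ell)}_1,\dots,u^{(\ell)}_{m_\ell}$ its distinct full-paradiagonal indeterminates, so that $a^{(\ell)}_d=u^{(\ell)}_{m_\ell-d}$ for $0\leq d\leq m_\ell-1$, while $a^{(\ell)}_{m_\ell}$ is the bottom element and $a^{(\ell)}_d=0$ for $d>m_\ell$. Since $A_i$ and $A_j$ have type $1$, their bottom elements are nonzero scalars $c_i,c_j\in\Fq\setminus\{0\}$; since $M$ is strict, every attic entry of $A_i$ and of $A_j$ is a scalar. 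Put $\delta:=v_i-v_j=m_j-m_i$, which is $\geq 0$ by the hypothesis that $A_i$ has at least as many columns as $A_j$.

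Step 1 (column operations). For each $r=0,1,\dots,v_j-1$, I would subtract $\frac{c_j}{c_i}$ times the $(v_i-r)$-th column of $A_i$ from the $(v_j-r)$-th column of $A_j$. These columns of $A_i$ exist because $v_i-(v_j-1)\geq 1$ — this is the only place the column-count hypothesis enters. For fixed $r$, the $(v_i-r)$-th column of $A_i$ has entry $c_i$ in row $u-r$ and zeros in all rows below it, while the $(v_j-r)$-th column of $A_j$ has entry $c_j$ in row $u-r$ and zeros below it; so the operation replaces the bottommost full paradiagonal of the $j$-th block by zeros without creating any nonzero basement entry. A direct computation shows the new $(s,v_j-r)$ entry of that block equals $a^{(j)}_{s-v_j+r}-\frac{c_j}{c_i}\,a^{(i)}_{s-v_i+r}$, which depends on $s$ and $r$ only through $d:=s-(v_j-r)$ (the second index being $d-\delta$). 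Hence the modified block $\widetilde B_j$ is again Toeplitz and tall, its attic paradiagonals are scalars (using strictness of $A_i$), its basement is zero, and — using $a^{(i)}_{m_j-\delta}=a^{(i)}_{m_i}=c_i$ — its $m_j$-th paradiagonal is zero. Its remaining full paradiagonals, however, now carry entries of the form $u^{(j)}_p-\frac{c_j}{c_i}w_p$ $(p=1,\dots,m_j)$, where each $w_p$ is either an indeterminate of $A_i$ or a scalar, and in particular involves none of the indeterminates of $A_j$. All other blocks of $M$ are untouched, and the determinant is unchanged.

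Step 2 (cleanup substitution). I would then apply the substitution $u^{(j)}_p\mapsto u^{(j)}_p+\frac{c_j}{c_i}w_p$ for each indeterminate $u^{(j)}_p$ of the old block $A_j$. Since each $w_p$ is free of all the $u^{(j)}$-variables, this is a unitriangular $\Fq$-algebra automorphism of the polynomial ring; it fixes every block except the $j$-th one, and it turns $\widetilde B_j$ into a genuine type-$0$ slant matrix $B_j$ whose nonzero full paradiagonals carry the distinct indeterminates $u^{(j)}_1,\dots,u^{(j)}_{m_j}$ and whose attic (all scalars) is unchanged; thus $B_j$ is strict and disjoint from the other blocks. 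This automorphism induces a bijection of $\Fq^{N}$, so it preserves the number of zeros of the determinant and hence $\operatorname{SiPr}$. The resulting matrix $M'=(A_1\mid\cdots\mid A_{j-1}\mid B_j\mid A_{j+1}\mid\cdots\mid A_k)$ is then a strict multislant matrix of the same size as $M$, agrees with $M$ outside the $j$-th block, has a type-$0$ block there, and satisfies $\operatorname{SiPr}M'=\operatorname{SiPr}M$, as desired.

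I expect the difficulty to be careful bookkeeping rather than any real obstruction. The points that need genuine care are: (i) the exact positions of $c_i$ and $c_j$ in their columns, so that the stated column operations annihilate the bottommost paradiagonal of the $j$-th block and leave its basement zero; (ii) that the modified block stays Toeplitz and strict, which rests on the identity $m_j=m_i+\delta$ together with strictness of $A_i$; and (iii) that the cleanup substitution is genuinely an automorphism (it is unitriangular) and leaves every other block alone (it does, since distinct blocks use disjoint indeterminates). One also invokes the two invariance facts — that column operations over $\Fq$ leave $\det M$ unchanged, and that an $\Fq$-algebra automorphism of the polynomial ring permutes $\Fq^{N}$ and so preserves $P(\,\cdot\mapsto 0)$ — but both are immediate.
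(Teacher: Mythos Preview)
Your proposal is correct and follows essentially the same approach as the paper: subtract the appropriate scalar multiple of the corresponding column of $A_i$ from each column of $A_j$ (counting from the right) to kill the bottommost full paradiagonal, then apply a change of variables to turn the resulting ``quasi-indeterminates'' back into honest indeterminates. Your write-up supplies more explicit index bookkeeping than the paper's proof (which argues largely by example), but the two arguments are the same in substance.
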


\begin{proof}
First, we sketch the argument; then, we will illustrate it on an example.

The block $A_{i}$ has at least as many columns as the block $A_{j}$, and thus
has at most as many full paradiagonals as the block $A_{j}$ (since the two
blocks have the same height). Thus, the block $A_{i}$ has at most as many
indeterminates as the block $A_{j}$ (since both blocks have type 1 and
are strict, so their
number of indeterminates equals their number of full paradiagonals minus $1$).

Now, from each column of $A_{j}$, we subtract a scalar multiple of the
corresponding column of $A_{i}$\ \ \ \ \footnote{Here, we are counting the
columns of a block from the right. That is, the ``corresponding column'' of
the $3$-rd-from-the-right column of $A_{j}$ is the $3$-rd-from-the-right
column of $A_{i}$.
The reason why this ``corresponding column'' always exists is that the block
$A_{i}$ has at least as many columns as the block $A_{j}$.} (choosing the
scalar factor in such a way that the subtraction will turn the bottommost full
paradiagonal of $A_{j}$ into $\left(  0,0,\ldots,0\right)  $). As a result of
this subtraction, the indeterminates in $A_{j}$ are replaced by
``quasi-indeterminates'' (i.e., differences of the form ``indeterminate minus
a scalar'' or ``indeterminate minus a scalar multiple of another
indeterminate''). However, these ``quasi-indeterminates'' are still uniformly
independently distributed over $\Fq$ when we evaluate our
probability, and we can apply a change of variables to transform them back
into distinct indeterminates; as a result, the block $A_{j}$ once again
becomes a slant matrix, but now one of type 0 (since its bottommost full
paradiagonal is $\left(  0,0,\ldots,0\right)  $).

Thus we have transformed the block $A_j$ into a new block, which is a slant
matrix of type 0. All other blocks of $M$ remain as they were in $M$.
The full matrix obtained through these transformations is called
$M^{\prime}$.

Here is an example: Assume that
\[
M=\left(
\begin{array}
[c]{ccccc}%
b & 2 & 3 & z & 5\\
a & b & 2 & y & z\\
1 & a & b & x & y\\
& 1 & a & 1 & x\\
&  & 1 &  & 1
\end{array}
\right)  .
\]
This multislant matrix has just two blocks:
\[
A_{i}=\left(
\begin{array}
[c]{ccc}%
b & 2 & 3\\
a & b & 2\\
1 & a & b\\
& 1 & a\\
&  & 1
\end{array}
\right)  ,\ \ \ \ \ \ \ \ \ \ A_{j}=\left(
\begin{array}
[c]{cc}%
z & 5\\
y & z\\
x & y\\
1 & x\\
& 1
\end{array}
\right)  ,
\]
both being of type 1. Now, we do what we said we would do: From each column of
$A_{j}$, we subtract a scalar multiple of the corresponding column of $A_{i}$. In
this case, the necessary scalar factor is $1$ (since both $A_{i}$ and $A_{j}$
have bottom elements $1$), so we are just subtracting from each column of
$A_{j}$ the corresponding column of $A_{i}$. The resulting matrix is%
\[
\widetilde{M}=\left(
\begin{array}
[c]{ccccc}%
b & 2 & 3 & z-2 & 5-3\\
a & b & 2 & y-b & z-2\\
1 & a & b & x-a & y-b\\
& 1 & a & 0 & x-a\\
&  & 1 &  & 0
\end{array}
\right)  .
\]
Note that the entries $x-a$, $y-b$ and $z-2$ in this matrix are
``quasi-indeterminates'', whereas the $5-3$ is just a constant.
If we now perform a change of variables that substitutes $x$, $y$ and $z$ for
$x-a$, $y-b$ and $z-2$ in this matrix (we can do this without changing the
singular probability, since these are independent indeterminates), then we
obtain%
\[
\left(
\begin{array}
[c]{ccccc}%
b & 2 & 3 & z & 5-3\\
a & b & 2 & y & z\\
1 & a & b & x & y\\
& 1 & a & 0 & x\\
&  & 1 &  & 0
\end{array}
\right)  ;
\]
this is our matrix $M^{\prime}$. It is again a strict multislant matrix, and
it differs from $M$ only in that the block $A_{j}$ has been replaced by a
block of type 0. Since our column operations have left the determinant
unchanged (and our substitutions have left the singular probability
unchanged), we have $\operatorname*{SiPr}M=\operatorname*{SiPr}\left(
M^{\prime}\right)  $.

Note how we used that the block $A_{i}$ has at least as many columns as the
block $A_{j}$ (indeed, this ensured that each column of $A_{j}$ had a
corresponding column of $A_{i}$ to subtract from it), and also how we used
that the block $A_{i}$ has at most as many indeterminates as the block $A_{j}$
(indeed, this ensured that after the subtraction of columns, the
indeterminates from $A_{i}$ got subtracted only from indeterminates in $A_{j}%
$, rather than from the attic entries). Moreover, the strictness of $M$
ensures that we have not subtracted any indeterminates from any constants.

The example we have just analyzed was representative of the general case. If
$M$ has more blocks besides $A_{i}$ and $A_{j}$, then these extra blocks are
left untouched by the subtractions and do not interfere with the argument. If
the bottom elements of $A_{i}$ and $A_{j}$ are not $1$ but other nonzero
elements of $\Fq$, then we will have to subtract nontrivial scalar
multiples of columns of $A_{j}$ rather than subtracting these columns
directly, but the argument will not essentially change.
\end{proof}

\begin{lemma}
\label{lem.multislant.redX}Let $M$ be a multislant matrix that is square. Let
$A_{j}$ be a block of $M$ that has type X. For any $v\in\Fq$, we
let $M^{j\rightarrow v}$ be the multislant matrix obtained from $M$ by
replacing the block $A_{j}$ by $A_{j}^{\rightarrow v}$. Then,
\[
\operatorname*{SiPr}M=\dfrac{1}{q}\sum_{v\in\Fq}%
\operatorname*{SiPr}\left(  M^{j\rightarrow v}\right)  .
\]

\end{lemma}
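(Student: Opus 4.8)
The plan is to condition on the value taken by the bottom element of the block $A_{j}$. Since $A_{j}$ has type X, its bottom element $u_{0}$ is an indeterminate, and the bottommost full paradiagonal of $A_{j}$ is $\tup{u_{0},u_{0},\ldots,u_{0}}$. The first step is to check that $u_{0}$ occurs in the matrix $M$ \emph{only} along this bottommost full paradiagonal of $A_{j}$. Within $A_{j}$ itself, the defining properties of a slant matrix prevent $u_{0}$ from appearing in the attic (an attic indeterminate never lies on a full paradiagonal) and from appearing on any other full paradiagonal (indeterminates on distinct full paradiagonals are distinct); and the pairwise disjointness of the blocks of $M$ prevents $u_{0}$ from occurring in any block $A_{i}$ with $i\neq j$. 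Hence $u_{0}$ is, in the language of Definition~\ref{def.mapstozero}, one of the random indeterminates of $M$, and it appears nowhere in $M$ except on the bottommost full paradiagonal of $A_{j}$.

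Next I would observe that, for each $v\in\Fq$, substituting $v$ for $u_{0}$ (and leaving all other indeterminates of $M$ untouched) turns $M$ into precisely $M^{j\rightarrow v}$; this is immediate from the previous step together with the definition of $A_{j}^{\rightarrow v}$. I should also record here that $M^{j\rightarrow v}$ really is a square multislant matrix, so that $\operatorname*{SiPr}\tup{M^{j\rightarrow v}}$ is well-defined: indeed, $A_{j}^{\rightarrow v}$ is a slant matrix by construction (of type 0 if $v=0$, and of type 1 otherwise), it has the same number of rows and columns as $A_{j}$, and its set of indeterminates is contained in that of $A_{j}$, so it remains disjoint from all the other blocks $A_{i}$ with $i\neq j$; therefore $M^{j\rightarrow v}=\tup{A_{1}\mid\cdots\mid A_{j}^{\rightarrow v}\mid\cdots\mid A_{k}}$ is again a horizontal concatenation of pairwise disjoint slant matrices of the same height.

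Finally I would conclude by the law of total probability. When uniform independent random elements of $\Fq$ are substituted for all indeterminates of $M$, the value assigned to $u_{0}$ is uniform over $\Fq$ and independent of the values assigned to the remaining indeterminates; conditioning on this value $v$ and invoking the previous step, the conditional probability that $\det M=0$ equals $\operatorname*{SiPr}\tup{M^{j\rightarrow v}}$. Summing over the $q$ values $v\in\Fq$, each weighted by $1/q$, then gives the claimed formula
\[
\operatorname*{SiPr}M=\frac{1}{q}\sum_{v\in\Fq}\operatorname*{SiPr}\tup{M^{j\rightarrow v}} .
\]
There is no genuine obstacle in this argument; the only point requiring care is the bookkeeping in the first step (that $u_{0}$ has no other occurrence in $M$), after which the statement is just an instance of conditioning on one independent coordinate, made rigorous by the fact that $P\tup{\,\cdot\mapsto\cdot\,}$ in Definition~\ref{def.mapstozero} is literally the probability under independent uniform substitutions.
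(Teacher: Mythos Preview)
Your proposal is correct and follows exactly the same approach as the paper: condition on the value taken by the bottom indeterminate $u_{0}$ of the type-X block $A_{j}$ and apply the law of total probability. The paper's own proof is a three-sentence version of this; you have simply spelled out the bookkeeping (uniqueness of the occurrence of $u_{0}$ in $M$, and that $M^{j\rightarrow v}$ is again a square multislant matrix) that the paper leaves implicit.
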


\begin{proof}
Consider the bottom element of $A_{j}$; this is an indeterminate (since
$A_{j}$ has type X). When we substitute a random element of $\Fq$
for each of the indeterminates appearing in $M$, this indeterminate becomes an
element of $\Fq$. More precisely, for each $v\in\Fq$,
this indeterminate becomes $v$ with probability $\dfrac{1}{q}$. Thus, the
claim follows from the law of total probability.
\end{proof}

\begin{lemma}
\label{lem.multislant.red01}Let $M$ be a multislant matrix that is square and
has $k$ blocks and signature $\left(  0,k-1,1\right)  $.

Thus, all but one blocks of $M$ have type 0, whereas the remaining block has
type 1. Let us refer to the latter block as the ``strange block''. Let
$M^{\prime}$ be the matrix obtained from $M$ by removing the bottommost row of
$M$ and the rightmost column of the strange block. (Note that this will cause
the strange block to disappear entirely if it had only one column.) Then:

\begin{enumerate}

\item[\textbf{(a)}]
The matrix $M^{\prime}$ is again a multislant matrix of signature
$\left(  k-1,0,1\right)  $ or $\left(  k-1,0,0\right)  $ (depending on whether
the strange block had more than one column or not).

\item[\textbf{(b)}]
We have $\operatorname*{SiPr}M=\operatorname*{SiPr}\left(
M^{\prime}\right)  $.

\end{enumerate}

\end{lemma}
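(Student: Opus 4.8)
The plan is to expand $\det M$ along the bottommost row of $M$. This is fruitful because that row has essentially only one nonzero entry. Indeed, in a slant matrix of type $0$, every basement entry is $0$ and the bottommost full paradiagonal is $\left(u_0, u_0, \ldots, u_0\right) = \left(0, 0, \ldots, 0\right)$; since the bottom row of a tall matrix consists of basement entries together with the lowest entry of the bottommost full paradiagonal, the whole bottom row of a type-$0$ block vanishes. The same reasoning applied to the strange block $A_s$ (which has type $1$) shows that its bottom row equals $\left(0, 0, \ldots, 0, u_0\right)$, with $u_0 \in \Fq \setminus \set{0}$ the bottom element of $A_s$, sitting in the last column of $A_s$. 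Concatenating the blocks, we conclude that the bottom row of $M$ has exactly one nonzero entry, namely $u_0$, located in the column of $M$ coming from the last column of $A_s$. Laplace expansion of $\det M$ along this row therefore gives $\det M = \pm\, u_0 \cdot \det M'$, where $M'$ is obtained from $M$ by deleting this row and this column --- i.e., $M'$ is precisely the matrix described in the lemma. Since $u_0$ is a fixed nonzero element of $\Fq$, the polynomials $\det M$ and $\det M'$ vanish under exactly the same substitutions of field elements for the indeterminates (any indeterminate of $M$ not appearing in $M'$ fails to appear in $\det M$ anyway), so $\operatorname*{SiPr} M = \operatorname*{SiPr}\left(M'\right)$; this is part \textbf{(b)}, granted that $M'$ is a square multislant matrix --- which is exactly what part \textbf{(a)} will establish.

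For part \textbf{(a)}, I would track each block. Since $M$ is square, the column counts $c_1, \ldots, c_k$ of its blocks sum to the row count $r$, so if $k \geq 2$ then $c_i < r$ for every $i$, whence each type-$0$ block has its bottommost full paradiagonal strictly below its main diagonal. Deleting the (all-zero) bottom row of such a block $A_i$ leaves a tall Toeplitz matrix whose new bottommost full paradiagonal is the one lying just above the old one, which carries a genuine indeterminate (one of $u_1, \ldots, u_m$); checking that the remaining requirements of Definition~\ref{def.slant} are inherited, $A_i$ has turned into a slant matrix of type X. The strange block $A_s$ instead loses both its bottom row and its last column, i.e.\ is replaced by its leading principal $\left(r-1\right) \times \left(c_s - 1\right)$ submatrix; this is again tall and Toeplitz with its bottommost full paradiagonal still carrying $u_0 \neq 0$, hence a slant matrix of type $1$ --- unless $c_s = 1$, in which case no column is left and the block disappears. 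All resulting blocks have $r - 1$ rows and are pairwise disjoint (their indeterminate sets only shrink), so $M'$ is a multislant matrix; it has $k-1$ blocks of type X, no block of type $0$, and one block of type $1$ precisely when $c_s \geq 2$. This gives the signature $\left(k-1, 0, 1\right)$ or $\left(k-1, 0, 0\right)$ as claimed, and the same discussion covers the degenerate case $k = 1$, where there are no type-$0$ blocks, $A_s = M$ is square, and $M'$ is its leading principal submatrix --- the empty $0 \times 0$ matrix when $r = 1$.

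The routine part is the paradiagonal-index bookkeeping. The one step needing genuine care is verifying that a type-$0$ block, once its bottom row is removed, satisfies \emph{all} the conditions of Definition~\ref{def.slant} for a slant matrix of type X --- most notably that the newly exposed bottom element is an indeterminate distinct from the other full-paradiagonal indeterminates and from every attic entry. This relies on the fact that in a slant matrix no attic indeterminate lies on a full paradiagonal, together with the disjointness of the blocks of $M$. Once this is in place, parts \textbf{(a)} and \textbf{(b)} follow as above.
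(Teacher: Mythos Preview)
Your proposal is correct and follows essentially the same approach as the paper: expand $\det M$ along the bottom row, observe that the only nonzero entry there is the bottom element $u_0 \neq 0$ of the strange block, and deduce $\det M = \pm u_0 \det M'$; for part \textbf{(a)}, track what happens to each block after deleting the bottom row (and, for the strange block, its last column). You supply somewhat more detail than the paper does---in particular, you explicitly handle the $k=1$ degenerate case and note why indeterminates lost in passing from $M$ to $M'$ do not affect the singular probability---but the argument is the same.
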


\begin{proof}
\textbf{(a)} When we pass from $M$ to $M^{\prime}$, each block of type 0
becomes a block of type X (since it loses a row and thus loses its bottommost
full paradiagonal, which consisted of zeroes\footnote{and it has at least two
full paradiagonals, since it has more rows than columns}).
The strange block either
remains a block of type 1 (if it had more than one column), or disappears
entirely (if it didn't). These account for all blocks of $M^{\prime}$. Thus,
the matrix $M^{\prime}$ is a multislant matrix of signature $\left(
k-1,0,1\right)  $ or $\left(  k-1,0,0\right)  $ (depending on whether the
strange block had more than one column or not). \medskip

\textbf{(b)} The bottom row of $M$ has only one nonzero entry, which is the
bottom element of the strange block. Since the strange block has type 1, this
bottom element must be some nonzero element $v\in\Fq$. Thus,
expanding the determinant of $M$ along the bottom row yields $\det M=\pm
v\det\left(  M^{\prime}\right)  $ (since $v$ is the only nonzero entry in the
bottom row of $M$). Since $v$ is nonzero, this yields that $\det M$ vanishes
exactly when $\det\left(  M^{\prime}\right)  $ vanishes. Hence,
$\operatorname*{SiPr}M=\operatorname*{SiPr}\left(  M^{\prime}\right)  $.
\end{proof}

\begin{proof}
[Proof of Theorem \ref{thm.multislant.prob}.]Induct on the size of the matrix
$M$. Inside that induction step, apply strong induction on $2i+\ell$. So let
us consider a multislant matrix $M$ that is square and nonempty and has
signature $\left(  i,j,\ell\right)  $. Let $k=i+j+\ell$ be its number of
blocks. We must prove the equality (\ref{eq.thm.multislant.prob.eq}).

Without loss of generality, we assume that the multislant matrix $M$ is
strict.\footnote{Indeed,
assume that (\ref{eq.thm.multislant.prob.eq}) is proved in the case when $M$
is strict. In other words, $\operatorname*{SiPr}M=1-\gamma_{k}\left(
1-\dfrac{0^{\ell}}{q^{i}}\right)  $ whenever $M$ is strict. Now, let $M$ be
arbitrary (not necessarily strict). We shall refer to the indeterminates that
appear as attic entries in the blocks of $M$ as \emph{attic variables}. If we
substitute a random element of $\Fq$ for each of the attic
variables, then the matrix $M$ becomes strict, and thus (by our assumption)
$\operatorname*{SiPr}M$ becomes $1-\gamma_{k}\left(  1-\dfrac{0^{\ell}}{q^{i}%
}\right)  $. Hence, $\operatorname*{SiPr}M$ also equals $1-\gamma_{k}\left(
1-\dfrac{0^{\ell}}{q^{i}}\right)  $ before these substitutions. In other
words, (\ref{eq.thm.multislant.prob.eq}) holds for arbitrary $M$. This shows
that \textquotedblleft$M$ is strict\textquotedblright\ can indeed be assumed
without loss of generality.}

We are in one of the following four cases:

\textit{Case 1:} The matrix $M$ has at least one block of type X.

\textit{Case 2:} The matrix $M$ has no block of type X, but has more than $1$
block of type 1.

\textit{Case 3:} The matrix $M$ has no block of type X, and has exactly one
block of type 1.

\textit{Case 4:} The matrix $M$ has no block of type X, and has no block of
type 1.

Let us first consider Case 1. In this case, the matrix $M$ has at least one
block of type X. Let $A_{j}$ be this block. Thus, Lemma
\ref{lem.multislant.redX} yields%
\begin{equation}
\operatorname*{SiPr}M=\dfrac{1}{q}\sum_{v\in\Fq}%
\operatorname*{SiPr}\left(  M^{j\rightarrow v}\right)  ,
\label{pf.thm.multislant.prob.c1.1}%
\end{equation}
where $M^{j\rightarrow v}$ is defined as in Lemma \ref{lem.multislant.redX}.
Now, for any nonzero $v\in\Fq$, the matrix $M^{j\rightarrow v}$ has
signature $\left(  i-1,j,\ell+1\right)  $ (since it is obtained from $M$ by
replacing the type-X block $A_{j}$ by a type-1 block) and still has $k$
blocks, and thus satisfies%
\[
\operatorname*{SiPr}\left(  M^{j\rightarrow v}\right)  =1-\gamma_{k}\left(
1-\dfrac{0^{\ell+1}}{q^{i-1}}\right)
\]
(by the induction hypothesis of our strong induction, since $2\left(
i-1\right)  +\left(  \ell+1\right)  <2i+\ell$). Since $0^{\ell+1}=0$, this
simplifies to%
\begin{equation}
\operatorname*{SiPr}\left(  M^{j\rightarrow v}\right)  =1-\gamma_{k}.
\label{pf.thm.multislant.prob.c1.2}%
\end{equation}
On the other hand, the matrix $M^{j\rightarrow0}$ has signature $\left(
i-1,j+1,\ell\right)  $ (since it is obtained from $M$ by replacing the type-X
block $A_{j}$ by a type-0 block) and still has $k$ blocks, and thus satisfies%
\begin{equation}
\operatorname*{SiPr}\left(  M^{j\rightarrow0}\right)  =1-\gamma_{k}\left(
1-\dfrac{0^{\ell}}{q^{i-1}}\right)  \label{pf.thm.multislant.prob.c1.3}%
\end{equation}
(by the induction hypothesis of our strong induction, since $2\left(
i-1\right)  +\ell<2i+\ell$). Now, (\ref{pf.thm.multislant.prob.c1.1}) becomes%
\begin{align*}
\operatorname*{SiPr}M  &  =\dfrac{1}{q}\sum_{v\in\Fq}%
\operatorname*{SiPr}\left(  M^{j\rightarrow v}\right)  =\dfrac{1}{q}\left(
\operatorname*{SiPr}\left(  M^{j\rightarrow0}\right)  +\sum_{\substack{v\in
\Fq;\\v\neq0}}\operatorname*{SiPr}\left(  M^{j\rightarrow
v}\right)  \right) \\
&  =\dfrac{1}{q}\left(  \left(  1-\gamma_{k}\left(  1-\dfrac{0^{\ell}}%
{q^{i-1}}\right)  \right)  +\sum_{\substack{v\in\Fq;\\v\neq
0}}\left(  1-\gamma_{k}\right)  \right) \\
&  \ \ \ \ \ \ \ \ \ \ \ \ \ \ \ \ \ \ \ \ \left(  \text{by
(\ref{pf.thm.multislant.prob.c1.3}) and (\ref{pf.thm.multislant.prob.c1.2}%
)}\right) \\
&  =\dfrac{1}{q}\left(  \left(  1-\gamma_{k}\left(  1-\dfrac{0^{\ell}}%
{q^{i-1}}\right)  \right)  +\left(  q-1\right)  \left(  1-\gamma_{k}\right)
\right) \\
&  \ \ \ \ \ \ \ \ \ \ \ \ \ \ \ \ \ \ \ \ \left(  \text{since the }\sum\text{
sum has precisely }q-1\text{ addends}\right) \\
&  =1-\gamma_{k}\left(  1-\dfrac{0^{\ell}}{q^{i}}\right)
\ \ \ \ \ \ \ \ \ \ \left(  \text{by a straightforward computation}\right)  .
\end{align*}
Thus, (\ref{eq.thm.multislant.prob.eq}) has been proven in Case 1.

Let us next consider Case 2. In this case, the matrix $M$ has no block of type
X, but has more than $1$ block of type 1. Thus, the signature $\left(
i,j,\ell\right)  $ of $M$ satisfies $\ell>1$, so that $0^{\ell}=0$ and
$0^{\ell-1}=0$.

We assumed that $M$ has more than $1$ block of type 1. Thus, $M$ has at least
two distinct blocks of type 1. Let these two blocks be $A_{i}$ and $A_{j}$,
labelled in such a way that the block $A_{i}$ has at least as many columns as
the block $A_{j}$. Thus, Lemma \ref{lem.multislant.red11} yields that there is
a strict multislant matrix $M^{\prime}$ with the following properties:

\begin{itemize}
\item The matrix $M^{\prime}$ is square and has the same size as $M$.

\item It satisfies $\operatorname*{SiPr}M=\operatorname*{SiPr}\left(
M^{\prime}\right)  $.

\item The matrix $M^{\prime}$ differs from $M$ only in the block $A_{j}$ being
replaced by a new block, which has type 0.
\end{itemize}

Consider this matrix $M^{\prime}$. By its third property, this matrix
$M^{\prime}$ has signature $\left(  i,j+1,\ell-1\right)  $, and thus still has
$k$ blocks. Hence,%
\[
\operatorname*{SiPr}\left(  M^{\prime}\right)  =1-\gamma_{k}\left(
1-\dfrac{0^{\ell-1}}{q^{i}}\right)
\]
(by the induction hypothesis of our strong induction, since $2i+\left(
\ell-1\right)  <2i+\ell$). In view of $\operatorname*{SiPr}%
M=\operatorname*{SiPr}\left(  M^{\prime}\right)  $ and $0^{\ell-1}=0=0^{\ell}%
$, this rewrites as
\[
\operatorname*{SiPr}M=1-\gamma_{k}\left(  1-\dfrac{0^{\ell}}{q^{i}}\right)  .
\]
Thus, (\ref{eq.thm.multislant.prob.eq}) has been proven in Case 2.

Let us next consider Case 3. In this case, the matrix $M$ has no block of type
X, and has exactly one block of type 1. Thus, the signature $\left(
i,j,\ell\right)  $ of $M$ satisfies $i=0$ and $\ell=1$. Hence, from
$k=\underbrace{i}_{=0}+j+\underbrace{\ell}_{=1}=j+1$, we obtain $j=k-1$ and
thus $\left(  \underbrace{i}_{=0},\underbrace{j}_{=k-1},\underbrace{\ell}%
_{=1}\right)  =\left(  0,k-1,1\right)  $. Thus, the matrix $M$ has signature
$\left(  0,k-1,1\right)  $. Hence, all but one blocks of $M$ have type 0,
whereas the remaining block has type 1. Let us refer to the latter block as
the ``strange block''. Let $M^{\prime}$ be the matrix obtained from $M$ by
removing the bottommost row of $M$ and the rightmost column of the strange
block. (Note that this will cause the strange block to disappear entirely if
it had only one column.) Then, Lemma \ref{lem.multislant.red01} \textbf{(a)}
yields that the matrix $M^{\prime}$ is again a multislant matrix of signature
$\left(  k-1,0,1\right)  $ or $\left(  k-1,0,0\right)  $ (depending on whether
the strange block had more than one column or not). Furthermore, Lemma
\ref{lem.multislant.red01} \textbf{(b)} yields that $\operatorname*{SiPr}%
M=\operatorname*{SiPr}\left(  M^{\prime}\right)  $.

Without loss of generality, we assume that $M$ is not a $1\times1$-matrix
(because if $M$ is a
$1\times1$-matrix, then it is easy to see that $k=1$ and $M=\left(
\begin{array}
[c]{c}%
1
\end{array}
\right)  $ and therefore $\operatorname*{SiPr}M=0=1-\gamma_{k}\left(
1-\dfrac{0^{\ell}}{q^{i}}\right)  $). Thus, if the strange block of $M$ has at
most one column, then we must have $k>1$ (because otherwise, the strange block
would be the only block of $M$, and therefore $M$ would be a $1\times
1$-matrix), and hence we have
\begin{equation}
\gamma_{k-1}\left(  1-\dfrac{1}{q^{k-1}}\right)  =\gamma_{k}
\label{pf.thm.multislant.prob.c3.gamma-rec}%
\end{equation}
in this case (by the definitions of $\gamma_{k-1}$ and $\gamma_{k}$). (Note
that the equality (\ref{pf.thm.multislant.prob.c3.gamma-rec}) would not hold
for $k=1$; this is why we had to handle the $1\times1$-matrix case separately.)

Now, we have already shown that $M^{\prime}$ is a multislant matrix of
signature $\left(  k-1,0,1\right)  $ or $\left(  k-1,0,0\right)  $ (depending
on whether the strange block had more than one column or not). Since this
matrix $M^{\prime}$ has smaller size than $M$, we can thus use the induction
hypothesis (of our first induction) to see that%
\begin{align*}
&  \operatorname*{SiPr}\left(  M^{\prime}\right) \\
&  =%
\begin{cases}
1-\gamma_{k}\left(  1-\dfrac{0^{1}}{q^{k-1}}\right)  , & \text{if the strange
block had more than one column};\\
1-\gamma_{k-1}\left(  1-\dfrac{0^{0}}{q^{k-1}}\right)  , & \text{otherwise}%
\end{cases}
\\
&  =%
\begin{cases}
1-\gamma_{k}, & \text{if the strange block had more than one column};\\
1-\gamma_{k-1}\left(  1-\dfrac{1}{q^{k-1}}\right)  , & \text{otherwise}%
\end{cases}
\\
&  =1-\gamma_{k}\ \ \ \ \ \ \ \ \ \ \left(  \text{by
(\ref{pf.thm.multislant.prob.c3.gamma-rec})}\right)  .
\end{align*}
Hence,%
\begin{align*}
\operatorname*{SiPr}M
&=\operatorname*{SiPr}\left(  M^{\prime}\right)
=1-\gamma_{k} \\
&=1-\gamma_{k}\left(  1-\dfrac{0^{\ell}}{q^{i}}\right)
\ \ \ \ \ \ \ \ \ \ \left(  \text{since }\ell=1\text{ and thus }0^{\ell
}=0\right)  .
\end{align*}
Thus, (\ref{eq.thm.multislant.prob.eq}) has been proven in Case 3.

Let us finally consider Case 4. In this case, the matrix $M$ has no block of
type X, and has no block of type 1. In other words, $i=0$ and $\ell=0$. All
blocks of $M$ have type 0 (since $M$ has no block of type X and no block of
type 1). Thus, the bottom row of $M$ is $\left(  0,0,\ldots,0\right)  $.
Consequently, we have $\det M=0$, so that $\operatorname*{SiPr}M=1$. Comparing
this with%
\begin{align*}
1-\gamma_{k}\left(  1-\dfrac{0^{\ell}}{q^{i}}\right)   &  =1-\gamma
_{k}\underbrace{\left(  1-\dfrac{0^{0}}{q^{0}}\right)  }_{=0}%
\ \ \ \ \ \ \ \ \ \ \left(  \text{since }\ell=0\text{ and }i=0\right) \\
&  =1,
\end{align*}
we find $\operatorname*{SiPr}M=1-\gamma_{k}\left(  1-\dfrac{0^{\ell}}{q^{i}%
}\right)  $. Thus, (\ref{eq.thm.multislant.prob.eq}) has been proven in Case 4.

Hence, we have proved (\ref{eq.thm.multislant.prob.eq}) in all four cases.
This completes the induction step, and thus Theorem \ref{thm.multislant.prob}
is proved.
\end{proof}

\subsection{\label{subsec.proof-shifted-staircase}Proof of Theorem \ref{thm.n.staircase} (ii)}

We shall now apply Theorem \ref{thm.multislant.prob} to Jacobi--Trudi matrices of $p$-shifted $n$-staircases.

\begin{lemma}
\label{multistair}
Let $p \leq n \leq k-1$.
Let $\lambda$ be the $p$-shifted $n$-staircase of length $k$.
Then, there is some matrix $J'$ obtained by permuting the columns of $J(\lambda)$ which is a multislant matrix
of signature $(p, n-p, 1)$.
(See Definition~\ref{def.multislant} for the definition of the signature.)
\end{lemma}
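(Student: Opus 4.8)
The goal is to show that the Jacobi–Trudi matrix $J(\lambda)$ of the $p$-shifted $n$-staircase $\lambda = (p+(k-1)n,\ p+(k-2)n,\ \ldots,\ p+n,\ p)$, after some permutation of its columns, is a multislant matrix of signature $(p,\ n-p,\ 1)$. Recall that $J(\lambda) = \tup{h_{\lambda_i - i + j}}_{1 \leq i,j \leq k}$. The plan is to first understand explicitly which indeterminates $h_m$ appear in $J(\lambda)$ and where, then to group the columns into $n$ bunches (one for each residue class that will give a Toeplitz block), reorder so that each bunch becomes a consecutive block of columns, and finally verify that each resulting block is a slant matrix of the claimed type, with the blocks pairwise disjoint.

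\textbf{Step 1: Locate the entries.} The $(i,j)$ entry of $J(\lambda)$ is $h_{\lambda_i - i + j} = h_{p + (k-i)n - i + j}$. As $j$ runs over $[k]$, the index runs through an arithmetic-progression-free set only within a fixed column; the key observation is that two entries $h_{\lambda_i - i + j}$ and $h_{\lambda_{i'} - i' + j'}$ coincide (as indeterminates, when the index is positive) iff $\lambda_i - i + j = \lambda_{i'} - i' + j'$, i.e. $(k-i)n + j - i = (k-i')n + j' - i'$. I would sort the columns of $J(\lambda)$ by the residue of (something like) $j$ modulo $n$ — more precisely, I expect the right invariant is the residue class of $j - 1$ modulo $n$, or perhaps $j$ reduced against the ``slope'' $n$ of the staircase; working out the exact bookkeeping is the first concrete task. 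Columns sharing this residue will, after reordering, stack into a single Toeplitz block because moving from one such column to the next (a jump of $n$ in $j$) while simultaneously moving down one row ($i \to i+1$) changes the index $\lambda_i - i + j$ by exactly $n - 1 + \ldots$; one must check this lands on the same paradiagonal pattern. So Step 1 produces the permutation $\sigma$ of columns and a description of the block decomposition $J' = (A_1 \mid \cdots \mid A_n)$ with $n$ blocks.

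\textbf{Step 2: Identify block types and disjointness.} Each block $A_t$ is a tall Toeplitz matrix; I must check (a) its full-paradiagonal entries are genuine indeterminates $h_m$ with $m > 0$, hence distinct across paradiagonals within the block, except that the bottommost full paradiagonal of \emph{one} distinguished block carries $h_0 = 1$ (type 1), $p$ of the blocks have their bottommost full paradiagonal still an indeterminate (type X), and $n - p$ of them have it equal to $0$, i.e. $h_m$ with $m < 0$ (type 0) — this is exactly where the hypothesis $p \leq n$ and the precise value of $p$ enters, since the smallest indices $\lambda_k - k + j = p - k + j$ become $0$ (giving $h_0 = 1$) for one value of $j$, negative (giving $0$) for $j < k - p$, hence $n - p$ blocks worth of type 0, and the bottom element is a positive-indexed indeterminate for the remaining blocks (type X), giving count $p$. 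Then (b) the attic entries are indeterminates or field elements, constant along paradiagonals, and none of the attic indeterminates lies on a full paradiagonal — here the arithmetic-progression structure is exactly what guarantees no accidental coincidences. And (c) disjointness of the blocks: no indeterminate $h_m$ with positive index appears in two different blocks, which again reduces to the injectivity statement that the residue class plus the paradiagonal position determines the index $m$ — this follows because within the relevant range the map from (block, paradiagonal) to index $\lambda_i - i + j$ is injective, a consequence of $n \leq k-1$.

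\textbf{Expected main obstacle.} The bookkeeping in Step 2(b), sorting out exactly how many blocks land in each type and pinning down that precisely one block is type 1, is where I expect the real work: it requires tracking the entries $h_{p - k + j}$ along the last row and the entries $h_{p + (k-i)n - i + j}$ near the boundary of the diagram, confirming that the ``$h_0 = 1$'' appears on a full paradiagonal of exactly one block (not in an attic, not on a non-full paradiagonal), and that the split $n = p + (n-p) + 1$ of the block count matches the signature $(p, n-p, 1)$ rather than, say, $(p, n-p-1, 1)$ with an off-by-one in either direction. The disjointness and the ``attic indeterminates avoid full paradiagonals'' conditions are conceptually the heart of why the arithmetic-progression hypothesis is needed, but they should follow cleanly once the indexing in Step 1 is correct; I would verify them by a direct inequality argument on the indices $\lambda_i - i + j$.
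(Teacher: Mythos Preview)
Your overall strategy---group the columns of $J(\lambda)$ by a residue class so that each class becomes a Toeplitz block, then read off the types from the last row---is exactly what the paper does. But you have the wrong modulus, and this propagates into an internal contradiction you actually wrote down.

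The $(i,j)$-entry of $J(\lambda)$ has index
\[
\lambda_i - i + j \;=\; p + (k-i)n - i + j \;=\; p + kn - i(n+1) + j,
\]
so moving down one row changes the index by $-(n+1)$, not by $-n$. Consequently the entries within a fixed column are all congruent modulo $n+1$, and the columns should be grouped by the residue of $j$ modulo $n+1$, yielding $n+1$ blocks---not the $n$ blocks you propose. Indeed, the signature $(p,\ n-p,\ 1)$ that you are aiming for has total $p + (n-p) + 1 = n+1$; your sentence ``the split $n = p + (n-p) + 1$ of the block count'' is literally the equation $n = n+1$. Once you fix the modulus to $n+1$, the rest of your plan goes through: the bottom elements of the $n+1$ blocks are the last $n+1$ entries of the bottom row of $J(\lambda)$, namely $h_p,\ h_{p-1},\ \ldots,\ h_1,\ 1,\ 0,\ \ldots,\ 0$ (using $p \le n$), giving exactly $p$ blocks of type X, one of type 1, and $n-p$ of type 0. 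The disjointness and attic conditions then follow from the fact that distinct residues mod $n+1$ give distinct indices, and that $n \le k-1$ ensures each block is tall.
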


\begin{proof}
By definition,
$\lambda = (p+(k-1)n,\ p+(k-2)n,\ \ldots,\ p+n,\ p)$.
Thus, $\lambda_i = p + (k-i)n$ for each $i \in [k]$.

Define a $k\times k$-matrix $M = \tup{M_{i,j}}_{i,j\in[k]}$
by
\[
M_{i,j} = \lambda_{i}-i+j
= p + kn - i\tup{n+1} + j
\qquad \text{for all $i,j\in[k]$.}
\]
Thus, $J(\lambda) = \left( h_{M_{i,j}} \right)_{i,j \in [k]}$.

We observe that any two entries lying in the same column
of $M$ are congruent modulo $n+1$.
For each index $q \leq n+1$,
the columns $q,\ q+(n+1),\ q+2(n+1),\ \ldots$ of $M$
contain exactly the entries of $M$ that are congruent to $M_{q,q} = \lambda_q$ modulo $n+1$.
Thus, the submatrix of $J(\lambda)$ consisting of
the corresponding columns is a slant matrix\footnote{The
assumption $n \leq k-1$ ensures that this submatrix
is nonempty. The assumption $p \leq n$ ensures that all
of its basement entries are $0$. The definition of $M$
along with the fact that
$J(\lambda) = \left( h_{M_{i,j}} \right)_{i,j \in [k]}$
ensures that each paradiagonal is constant. The remaining
requirements in the definition of a slant matrix are easily
verified.}, and the slant matrices obtained for different
indices $q \leq n+1$ are disjoint.
The bottom elements of these slant matrices are the last $n+1$
entries of the last row of $J(\lambda)$; these are
$h_p, h_{p-1}, \ldots, h_1, 1, 0, 0, \ldots, 0$.

Permuting the columns of $J(\lambda)$ in such a way that
each of these slant matrices appears as a contiguous block,
we thus obtain a multislant matrix of signature
$(p, n-p, 1)$.
%
%
%
%
%
\end{proof}

\begin{proof}[Proof of Theorem \ref{thm.n.staircase} \textbf{(ii)}]
Lemma~\ref{multistair} shows that, up to permutation of columns,
$J(\lambda)$ is a multislant matrix of signature $(p, n-p, 1)$
(and thus with $n+1$ blocks).
Hence, Theorem \ref{thm.multislant.prob} yields
\[
\operatorname*{SiPr}\tup{J\tup{\lambda}}
= 1 - \gamma_{n+1} \underbrace{\tup{1 - \dfrac{0^1}{q^p}}}_{= 1}
= 1 - \gamma_{n+1}
= 1 - \prod_{i=1}^{n} \left(  1-\dfrac{1}{q^{i}}\right)
\]
(by definition of $\gamma_{n+1}$).
Since $P(s_{\lambda} \mapsto 0) = \operatorname*{SiPr}\tup{J\tup{\lambda}}$,
this is precisely the claim of Theorem~\ref{thm.n.staircase} \textbf{(ii)}.
\end{proof}

Theorem~\ref{thm.n.staircase} \textbf{(i)} can be proved similarly
(but, as mentioned above, also follows from elementary
spans-and-independence reasoning because of the distinctness of
all indeterminates in the matrix\footnote{See
\cite[Theorem 1]{Haglund} for a closely related result with a
very similar proof. In fact, if not for an entry of $J\tup{\lambda/\mu}$
being $h_0 = 1$, Theorem~\ref{thm.n.staircase} \textbf{(i)} would be
a particular case of \cite[Theorem 1]{Haglund}.}).

Having proved Theorem~\ref{thm.n.staircase}, we can see that for $n=1$
we have
$P(s_{\lambda} \mapsto 0) = \dfrac{1}{q}$, and for $n=2$ we have
$P(s_{\lambda} \mapsto 0) = \dfrac{q^2+q-1}{q^3}$
recovering Theorem 6.5 and proving Conjecture 10.1 from Anzis et al. \cite{Anzis18}.

\section{Conjugating the skew partition}

Next, we shall prove a general result that generalizes
\cite[Corollary 3.3]{Anzis18} from partitions to skew
partitions.
Recall that $\lambda^{t}$ denotes the conjugate of a
partition $\lambda$.

\begin{theorem}
\label{thm.transpose}Let $\lambda/\mu$ be a skew partition. Let $a\in
\Fq$. Then,
\[
P\left(  s_{\lambda/\mu}\mapsto a\right)  =P\left(  s_{\lambda^{t}/\mu^{t}%
}\mapsto a\right)  .
\]
\end{theorem}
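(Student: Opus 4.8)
The plan is to prove Theorem~\ref{thm.transpose} by establishing a stronger polynomial-level statement: that the Schur functions $s_{\lambda/\mu}$ and $s_{\lambda^t/\mu^t}$, viewed as elements of $\mathcal{P} = \mathbb{Z}[h_1, h_2, h_3, \ldots]$, are carried to one another by an automorphism of $\mathcal{P}$ that \emph{preserves} the probability measure $P(\cdot \mapsto a)$. The natural candidate is a renaming of the generators $h_i$: if $\varphi : \mathcal{P} \to \mathcal{P}$ is a ring isomorphism sending each $h_i$ to a polynomial $\varphi(h_i)$ with the property that substituting uniform independent random elements of $\Fq$ for the $h_i$ induces the uniform distribution on the $\varphi(h_i)$, then $P(f \mapsto a) = P(\varphi(f) \mapsto a)$ for every $f$. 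So the core task is to identify the right $\varphi$ and check $\varphi(s_{\lambda/\mu}) = s_{\lambda^t/\mu^t}$ (up to sign, which is irrelevant since $a$ ranges over all of $\Fq$; but in fact no sign should appear).

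First I would recall the standard symmetric-function fact underlying conjugation: the omega involution $\omega : \Lambda \to \Lambda$ sends $h_n$ to $e_n$ and satisfies $\omega(s_{\lambda/\mu}) = s_{\lambda^t/\mu^t}$ (see \cite[Section 7.6]{EC2} or \cite[(I.2.7), (I.5.6)]{Macdonald}). The obstacle is that $\omega$ is not literally a substitution of the $h_i$ by new indeterminates — it sends $h_n$ to $e_n = \sum_{i_1 < \cdots < i_n} x_{i_1}\cdots x_{i_n}$, which in terms of the $h$'s is given by the Newton-type recursion $\sum_{i=0}^{n} (-1)^i h_{n-i} e_i = 0$, i.e. $e_n = -\sum_{i=1}^{n}(-1)^i h_i e_{n-i}$. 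The key combinatorial observation that saves the day is that this recursion expresses $e_n$ as $(-1)^{n-1} h_n$ plus a polynomial in $h_1, \ldots, h_{n-1}$ only. Consequently the change of variables $h_i \mapsto \omega(h_i) = e_i$ is \emph{triangular}: for each $n$, $\omega(h_n) = (-1)^{n-1} h_n + (\text{terms in } h_1, \ldots, h_{n-1})$, and the leading coefficient $(-1)^{n-1}$ is a unit in $\Fq$ (indeed in $\mathbb{Z}$). A triangular substitution with invertible leading coefficients is a bijection $\Fq^N \to \Fq^N$ for every $N$ (one solves for the new $h_n$ in terms of the old ones recursively), hence it preserves the uniform measure, hence it preserves $P(\cdot \mapsto a)$.

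Concretely, the steps in order are: (1) State and justify the measure-preservation lemma: if $\psi : \Fq[y_1, \ldots, y_N] \to \Fq[y_1, \ldots, y_N]$ is defined by $y_n \mapsto c_n y_n + g_n(y_1, \ldots, y_{n-1})$ with each $c_n \in \Fq^\times$, then the induced map $\Fq^N \to \Fq^N$ is a bijection, so $P(f \mapsto a) = P(\psi(f) \mapsto a)$ for all $f$ and $a$. (2) Observe that $\omega$, restricted to the subring generated by $h_1, \ldots, h_N$ (which is a polynomial ring, since the $h_i$ are algebraically independent — $\Lambda$ is free on the $h_i$), is exactly such a triangular substitution $h_n \mapsto (-1)^{n-1} h_n + (\text{lower})$, by the Newton recursion; hence apply the lemma with $c_n = (-1)^{n-1}$. (3) Invoke $\omega(s_{\lambda/\mu}) = s_{\lambda^t / \mu^t}$. (4) Combine: $P(s_{\lambda/\mu} \mapsto a) = P(\omega(s_{\lambda/\mu}) \mapsto a) = P(s_{\lambda^t/\mu^t} \mapsto a)$. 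One small bookkeeping point: $\omega$ involves $h_0 = 1$, and the convention $h_m = 0$ for $m < 0$ must be respected on both sides; since the Jacobi--Trudi matrix of $\lambda/\mu$ only involves $h_i$ for $i \le \lambda_1$ and that of $\lambda^t/\mu^t$ only $h_i$ for $i \le \lambda^t_1 = \ell(\lambda)$, one chooses $N = \max(\lambda_1, \ell(\lambda))$ and works in $\Fq[h_1, \ldots, h_N]$ throughout.

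The main obstacle, as flagged, is verifying the triangularity of $\omega$ with unit leading coefficients — but this is immediate from the classical identity $e_n = \sum_{i=1}^{n} (-1)^{i-1} h_i e_{n-i}$ (with $e_0 = 1$), which gives $e_n = (-1)^{n-1} h_n + \sum_{i=1}^{n-1} (-1)^{i-1} h_i e_{n-i}$, and induction shows each $e_{n-i}$ for $i \ge 1$ is a polynomial in $h_1, \ldots, h_{n-1}$; so the displayed triangular form holds with $c_n = (-1)^{n-1} \in \{\pm 1\} \subseteq \Fq^\times$. Everything else is formal. I would present the measure-preservation lemma as a standalone \texttt{lemma} (it may also be reusable elsewhere), then give the four-line proof of the theorem.
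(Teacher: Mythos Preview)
Your proposal is correct and follows essentially the same approach as the paper: both exploit the omega involution $\omega$ together with the fact that $e_n \in \mathbb{Z}[h_1,\ldots,h_n]$ (from the Newton/Jacobi--Trudi identities), so that $\omega$ restricts to an automorphism of the truncated polynomial ring $\Lambda_{\le N} = \mathbb{Z}[h_1,\ldots,h_N]$, and then invoke $\omega(s_{\lambda/\mu}) = s_{\lambda^t/\mu^t}$. The paper phrases the measure-preservation step abstractly (an automorphism of $\Lambda_{\le N}$ permutes the ring homomorphisms $\Lambda_{\le N} \to \Fq$, hence the tuples in $\Fq^N$), whereas you make the bijection $\Fq^N \to \Fq^N$ explicit by computing the leading coefficient $(-1)^{n-1}$ of $e_n$ as a polynomial in the $h$'s and observing that a triangular substitution with unit diagonal is invertible over any ring; these are two presentations of the same argument.

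One minor bookkeeping slip: your stated choice $N = \max(\lambda_1, \ell(\lambda))$ is too small in general, since the top-right entry of $J(\lambda/\mu)$ is $h_{\lambda_1 - \mu_k + k - 1}$ with $k = \ell(\lambda)$, so you need $N \ge \lambda_1 + \ell(\lambda) - 1$. This does not affect the argument, as any sufficiently large $N$ works.
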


\begin{vershort}
The following observation will aid in the proof.

\begin{proposition}
\label{prop.eval}
Let $f \in \Lambda_{\le N}  :=  \mathbb{Z}\left[  h_{1},h_{2},\ldots, h_N\right]$ and  $a \in \mathbb{F}_q$.
Then
\begin{equation}
\label{e prop eval}
P\left(  f\mapsto a\right)  =
\dfrac{\left(  \text{\# of ring homomorphisms  $\Lambda_{\le N} \to \mathbb{F}_q$
that send  $f$ to  $a$} \right)  }{q^N}.
\end{equation}
\end{proposition}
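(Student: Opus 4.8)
The plan is to unpack both sides of \eqref{e prop eval} directly from the definitions and observe that they count the same thing. First I would recall Definition~\ref{def.mapstozero}: since $f \in \Lambda_{\le N} = \mathbb{Z}[h_1, h_2, \ldots, h_N]$ involves only the indeterminates $h_1, \ldots, h_N$, we may take the parameter ``$N$'' in that definition to be our $N$, so that
\[
P\left(f \mapsto a\right) = \dfrac{\#\set{\tup{z_1, z_2, \ldots, z_N} \in \Fq^N \mid f\tup{z_1, z_2, \ldots, z_N} = a}}{q^N},
\]
using that there are $q^N$ tuples in $\Fq^N$. So it suffices to exhibit a bijection between the set of tuples $\tup{z_1, \ldots, z_N} \in \Fq^N$ with $f\tup{z_1, \ldots, z_N} = a$ and the set of ring homomorphisms $\Lambda_{\le N} \to \Fq$ sending $f$ to $a$.

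Next I would invoke the universal property of the polynomial ring $\ZZ[h_1, \ldots, h_N]$: ring homomorphisms $\ZZ[h_1, \ldots, h_N] \to \Fq$ are in canonical bijection with $N$-tuples of elements of $\Fq$, where a homomorphism $\varphi$ corresponds to the tuple $\tup{\varphi(h_1), \varphi(h_2), \ldots, \varphi(h_N)}$, and conversely a tuple $\tup{z_1, \ldots, z_N}$ corresponds to the evaluation homomorphism $h_i \mapsto z_i$. (Here one uses that $\ZZ$ is the initial ring, so there is no choice about where $\ZZ \subset \Lambda_{\le N}$ goes; this is why it is important that we work with $\ZZ[h_1, h_2, h_3, \ldots]$ rather than, say, a polynomial ring over a larger base.) Under this bijection, a homomorphism $\varphi$ satisfies $\varphi(f) = a$ if and only if the corresponding tuple $\tup{z_1, \ldots, z_N}$ satisfies $f\tup{z_1, \ldots, z_N} = a$, because $\varphi(f) = f\tup{\varphi(h_1), \ldots, \varphi(h_N)}$ by the definition of evaluation and multiplicativity/additivity of $\varphi$. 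Restricting the bijection to the subsets cut out by these equivalent conditions yields the desired bijection, and hence the two fractions are equal.

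There is essentially no obstacle here — the statement is a bookkeeping lemma, and the only thing to be careful about is matching the ``$N$'' of Definition~\ref{def.mapstozero} (which is allowed to be any integer large enough that $f$ involves only $h_1, \ldots, h_N$) with the ``$N$'' fixed in the statement of the proposition; the remark following Definition~\ref{def.mapstozero} already guarantees that the value of $P\tup{f \mapsto a}$ does not depend on this choice, so we are free to use the $N$ appearing in $\Lambda_{\le N}$. I would write the argument in two or three sentences, citing the universal property of polynomial rings, and not belabor it further.
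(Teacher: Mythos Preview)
Your proposal is correct and follows essentially the same argument as the paper: both invoke the universal property of the polynomial ring $\mathbb{Z}[h_1,\ldots,h_N]$ to set up a bijection between $\Fq^N$ and the set of ring homomorphisms $\Lambda_{\le N}\to\Fq$, then observe that this bijection restricts to the subsets determined by $f\mapsto a$. Your write-up is slightly more explicit (noting that $\ZZ$ is initial and addressing the choice of $N$), but there is no substantive difference.
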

\begin{proof}
By the universal property of polynomial rings, we know that
for each $N$-tuple $\left(  z_{1},z_{2}%
,\ldots,z_{N}\right)  \in\Fq^{N}$, there exists a unique ring
homomorphism $\varphi \colon \Lambda_{\leq N}\rightarrow\Fq$ that
sends the indeterminates $h_{1},h_{2},\ldots,h_{N}$ to $z_{1},z_{2}
,\ldots,z_{N}$, respectively. Thus, the $N$-tuples in
$\Fq^N$ are in bijection with the ring homomorphisms
$\Lambda_{\le N} \to \mathbb{F}_q$.
Hence the right side of \eqref{e prop eval}
agrees with the definition of $P\left(  f\mapsto a\right)$.
\end{proof}

\begin{proof}
[Proof of Theorem \ref{thm.transpose}.] Let us identify $\mathcal{P}$ with
$\Lambda$ as in \S \ \ref{sec.def}. For each positive integer $n$, let
$e_{n}=s_{\left(  1^{n}\right)  }\in\Lambda=\mathcal{P}$ be the $n$-th
elementary symmetric function.

From the theory of symmetric functions (\cite[\S 7.6]{EC2} or \cite[(2.7)]%
{Macdonald}), it is known that there is an involutive\footnote{A map is said
to be \emph{involutive} if it is its own inverse.} ring automorphism
$\omega \colon \Lambda\rightarrow\Lambda$ (known as the \emph{omega involution} or as
the \emph{fundamental involution}) defined by setting
\[
\omega\left(  h_{n}\right)  =e_{n}\qquad\text{for all }n\geq1.
\]
Now, the elementary symmetric function $e_{n}$ can be written as a polynomial in $h_{1},h_{2},\ldots,h_{n}$,
which follows, for example, from the Jacobi--Trudi identity:
$e_{n}=s_{\left(  1^{n}\right)  }=\det\left(  h_{1+i-j}\right)  _{1\leq i\leq
n,\ 1\leq j\leq n}$. Applying  $\omega$ to this, we see that
$h_{n}$ can likewise be written as a polynomial in $e_{1},e_{2},\ldots,e_{n}$.
Thus
\begin{align*}
\Lambda_{\le N} \, = &\  \tup{\text{subring of $\Lambda$ generated by $h_1, \dots, h_N$} } \\
            = &\  \tup{\text{subring of $\Lambda$ generated by $e_1, \dots, e_N$} },
\end{align*}
and $\omega$ restricts to an involutive ring automorphism
 $\omega_N \colon \Lambda_{\le N} \to \Lambda_{\le N}$.

Now, given a skew Schur function  $s_{\lambda/\mu}$, choose  $N$ such that
$s_{\lambda/\mu} \in \Lambda_{\le N}$.  Then,
$\omega_N\left(  s_{\lambda/\mu}\right) = \omega\left(  s_{\lambda/\mu}\right)
=s_{\lambda^{t} /\mu^{t}}$ (see, e.g., \cite[Theorem 7.15.6]{EC2} or \cite[(5.6)]{Macdonald}).
Thus, there is a map
\begin{align*}
& \left(  \text{ring homomorphisms  $\Lambda_{\le N} \to \mathbb{F}_q$
that send  $s_{\lambda^{t} /\mu^{t}}$ to  $a$} \right)  \\
\to & \left(  \text{ring homomorphisms  $\Lambda_{\le N} \to \mathbb{F}_q$
that send  $s_{\lambda /\mu}$ to  $a$} \right)
\end{align*}
that sends each
$\varphi$ to $\varphi \circ \omega_N$.
This map is furthermore a bijection, since $\omega_N$ is an automorphism.
Hence, the number of ring homomorphisms  $\Lambda_{\le N} \to \mathbb{F}_q$
that send  $s_{\lambda /\mu}$ to  $a$ does not change when we replace
$\lambda / \mu$ by $\lambda^t / \mu^t$.
By Proposition \ref{prop.eval}, this entails that
$P\left(  s_{\lambda/\mu}\mapsto a\right)  =P\left(  s_{\lambda^{t}/\mu^{t}%
}\mapsto a\right) $.
\end{proof}
\end{vershort}

\begin{verlong}
We will derive this from a more general result. First, we introduce a notation:

\begin{definition}
A ring endomorphism $\beta \colon \mathcal{P}\rightarrow\mathcal{P}$ will be called
\emph{friendly} if for each positive integer $n$, the image $\beta\left(
h_{n}\right)  $ can be written as a polynomial in $h_{1},h_{2},\ldots,h_{n}$.
\end{definition}

\begin{proposition}
\label{prop.friendly-aut}Let $\beta \colon \mathcal{P}\rightarrow\mathcal{P}$ be a
ring automorphism such that both $\beta$ and $\beta^{-1}$ are friendly. Let
$f\in\mathcal{P}$ and $a\in\Fq$. Then,%
\[
P\left(  f\mapsto a\right)  =P\left(  \beta\left(  f\right)  \mapsto a\right)
.
\]

\end{proposition}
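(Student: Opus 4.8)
The plan is to mimic closely the argument already used in the \texttt{vershort} proof of Theorem~\ref{thm.transpose}, but phrased for an arbitrary friendly automorphism rather than specifically for $\omega$. The key observation is that the ``friendly'' hypothesis on $\beta$ is exactly what is needed to guarantee that $\beta$ restricts to an automorphism of the finitely generated subring $\mathcal{P}_{\le N} := \mathbb{Z}[h_1, \ldots, h_N]$ for every $N$. Indeed, friendliness of $\beta$ gives $\beta(\mathcal{P}_{\le N}) \subseteq \mathcal{P}_{\le N}$, and friendliness of $\beta^{-1}$ gives the reverse inclusion $\beta^{-1}(\mathcal{P}_{\le N}) \subseteq \mathcal{P}_{\le N}$; together these force $\beta(\mathcal{P}_{\le N}) = \mathcal{P}_{\le N}$, so that $\beta$ restricts to a ring automorphism $\beta_N \colon \mathcal{P}_{\le N} \to \mathcal{P}_{\le N}$ with inverse $(\beta^{-1})_N$.

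Next I would invoke Proposition~\ref{prop.eval} (which, although stated inside the \texttt{vershort} block, I will assume is available, or else re-prove the one-line counting bijection between $N$-tuples in $\Fq^N$ and ring homomorphisms $\mathcal{P}_{\le N} \to \Fq$). Choose $N$ large enough that $f \in \mathcal{P}_{\le N}$; then $\beta(f) = \beta_N(f) \in \mathcal{P}_{\le N}$ as well. Precomposition with $\beta_N$ gives a map
\[
\set{\varphi \colon \mathcal{P}_{\le N} \to \Fq \text{ ring hom.} \mid \varphi(\beta(f)) = a}
\to
\set{\varphi \colon \mathcal{P}_{\le N} \to \Fq \text{ ring hom.} \mid \varphi(f) = a},
\qquad \varphi \mapsto \varphi \circ \beta_N,
\]
since $(\varphi \circ \beta_N)(f) = \varphi(\beta_N(f)) = \varphi(\beta(f))$. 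Because $\beta_N$ is an automorphism, this map is a bijection (its inverse is precomposition with $\beta_N^{-1}$). Hence the two sets have equal cardinality, and dividing by $q^N$ and applying Proposition~\ref{prop.eval} yields $P(f \mapsto a) = P(\beta(f) \mapsto a)$.

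I do not expect a serious obstacle here: the entire content is the bookkeeping that friendliness is precisely the two-sided stability condition making $\beta_N$ well-defined and invertible. The one point that deserves a careful sentence is the independence of the conclusion from the choice of $N$ — but this is already handled by the remark following Definition~\ref{def.mapstozero}, and moreover any $N$ with $f \in \mathcal{P}_{\le N}$ also satisfies $\beta(f) \in \mathcal{P}_{\le N}$, so a single common $N$ works for both sides simultaneously. Theorem~\ref{thm.transpose} then follows as the special case $\beta = \omega$ (identifying $\mathcal{P}$ with $\Lambda$), once one notes that $\omega$ is an involution, hence equal to its own inverse, and that both $\omega$ and $\omega^{-1} = \omega$ are friendly because $e_n = \det(h_{1+i-j})_{1 \le i,j \le n}$ expresses $e_n = \omega(h_n)$ as a polynomial in $h_1, \ldots, h_n$ (and symmetrically $h_n = \omega(e_n)$ as a polynomial in $e_1, \ldots, e_n$, i.e.\ $h_n = \omega^{-1}(h_n) = \omega(h_n)$... more precisely, applying $\omega$ to the Jacobi--Trudi expression shows $h_n$ is a polynomial in the $e_i$, whence $\omega(h_n) = e_n$ and also $\omega(e_n) = h_n$ give friendliness of $\omega$ directly).
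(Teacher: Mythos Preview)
Your proposal is correct and follows essentially the same argument as the paper's own proof: restrict $\beta$ and $\beta^{-1}$ to $\mathcal{P}_{\le N}$ using friendliness, obtain mutually inverse ring endomorphisms $\beta_N$ and $(\beta^{-1})_N$, and then use the bijection $\varphi \mapsto \varphi \circ \beta_N$ on ring homomorphisms to $\Fq$ to match up the two counts. The only cosmetic difference is that the paper inlines the content of Proposition~\ref{prop.eval} rather than citing it, and your final paragraph on the $\omega$ application (which is not part of the proposition being proved) gets a bit tangled in its phrasing, though the intended point is clear.
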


\begin{proof}
[Proof of Proposition \ref{prop.friendly-aut}.] Choose $N\in\mathbb{N}$ such
that $f$ only involves the indeterminates $h_{1},h_{2},\ldots,h_{N}$. Thus, we
can write $f$ as $f\left(  h_{1},h_{2},\ldots,h_{N}\right)  $.

Let $\Lambda_{\leq N}$ be the subring of $\Lambda$ generated by $h_{1}%
,h_{2},\ldots,h_{N}$. Hence, our choice of $N$ guarantees that $f\in
\Lambda_{\leq N}$. The definition of $P\left(  f\mapsto a\right)  $ yields%
\begin{equation}
P\left(  f\mapsto a\right)  = \dfrac{\left(  \text{\# of }\left(  z_{1}%
,z_{2},\ldots,z_{N}\right)  \in\Fq^{N}\mid f\left(  z_{1}%
,z_{2},\ldots,z_{N}\right)  =a\right)  }{\left(  \text{\# of all }\left(
z_{1},z_{2},\ldots,z_{N}\right)  \in\Fq^{N}\right)  }%
.\label{pf.prop.friendly-aut.P1}%
\end{equation}

However, $\mathcal{P}_{\leq N}$ is the polynomial ring over $\mathbb{Z}$ in
the indeterminates $h_{1},h_{2},\ldots,h_{N}$ (since $h_{1},h_{2},\ldots
,h_{N}$ are algebraically independent by definition). Hence, by the universal
property of polynomial rings, for each $N$-tuple $\left(  z_{1},z_{2}%
,\ldots,z_{N}\right)  \in\Fq^{N}$, there exists a unique ring
homomorphism $\varphi \colon \mathcal{P}_{\leq N}\rightarrow\Fq$ that
sends these indeterminates $h_{1},h_{2},\ldots,h_{N}$ to $z_{1},z_{2}%
,\ldots,z_{N}$, respectively. In other words, the map%
\begin{align*}
\left\{  \text{ring homomorphisms }\varphi \colon \mathcal{P}_{\leq N}\rightarrow
\Fq\right\}    & \rightarrow\Fq^{N},\\
\varphi & \mapsto\left(  \varphi\left(  h_{1}\right)  ,\varphi\left(
h_{2}\right)  ,\ldots,\varphi\left(  h_{N}\right)  \right)
\end{align*}
is a bijection. It restricts to a bijection
\begin{align*}
& \text{from }\left\{  \text{ring homomorphisms }\varphi \colon \mathcal{P}_{\leq
N}\rightarrow\Fq\ \mid\ \varphi\left(  f\right)  =a\right\}  \\
& \text{to }\left\{  \left(  z_{1},z_{2},\ldots,z_{N}\right)  \in
\Fq^{N}\ \mid\ f\left(  z_{1},z_{2},\ldots,z_{N}\right)
=a\right\}
\end{align*}
(because for any ring homomorphism $\varphi \colon \mathcal{P}_{\leq N}%
\rightarrow\Fq$, we have the equality
$\varphi\left(  f\right)  =f\left(
\varphi\left(  h_{1}\right)  ,\varphi\left(  h_{2}\right)  ,\ldots
,\varphi\left(  h_{N}\right)  \right)  $). Hence, by the bijection principle,%
\begin{align}
& \left(  \text{\# of ring homomorphisms }\varphi \colon \mathcal{P}_{\leq
N}\rightarrow\Fq\ \mid\ \varphi\left(  f\right)  =a\right)
\nonumber\\
& =\left(  \text{\# of }\left(  z_{1},z_{2},\ldots,z_{N}\right)  \in
\Fq^{N}\mid f\left(  z_{1},z_{2},\ldots,z_{N}\right)  =a\right)
.\label{pf.prop.friendly-aut.1}%
\end{align}

Since $\beta$ is friendly, it is easily seen that $\beta\left(  \Lambda_{\leq
N}\right)  \subseteq\Lambda_{\leq N}$. Therefore, we can restrict $\beta$ to
the subring $\Lambda_{\leq N}$ of $\Lambda$, thus obtaining a ring
endomorphism $\beta_{N} \colon \Lambda_{\leq N}\rightarrow\Lambda_{\leq N}$.
Likewise, we can obtain an endomorphism $\left(  \beta^{-1}\right)
_{N} \colon \Lambda_{\leq N}\rightarrow\Lambda_{\leq N}$ by restricting $\beta^{-1}$
to $\Lambda_{\leq N}$ (since $\beta^{-1}$ is friendly). These two restricted
endomorphisms $\beta_{N}$ and $\left(  \beta^{-1}\right)  _{N}$ are mutually
inverse, and thus are automorphisms.

Now, let $g=\beta\left(  f\right)  $. From $f\in\Lambda_{\leq N}$, we obtain
$\beta\left(  f\right)  \in\beta\left(  \Lambda_{\leq N}\right)
\subseteq\Lambda_{\leq N}$, so that $g=\beta\left(  f\right)  \in\Lambda_{\leq
N}$. In other words, the polynomial $g$ only involves the indeterminates
$h_{1},h_{2},\ldots,h_{N}$. Therefore, the same argument that gave us
(\ref{pf.prop.friendly-aut.1}) can be applied to $g$ instead of $f$, and this
yields%
\begin{align}
& \left(  \text{\# of ring homomorphisms }\varphi \colon \mathcal{P}_{\leq
N}\rightarrow\Fq\ \mid\ \varphi\left(  g\right)  =a\right)
\nonumber\\
& =\left(  \text{\# of }\left(  z_{1},z_{2},\ldots,z_{N}\right)  \in
\Fq^{N}\mid g\left(  z_{1},z_{2},\ldots,z_{N}\right)  =a\right)
.\label{pf.prop.friendly-aut.2}%
\end{align}
Likewise, the same argument that gave us (\ref{pf.prop.friendly-aut.P1})
yields%
\begin{equation}
P\left(  g\mapsto a\right) = \dfrac{\left(  \text{\# of }\left(  z_{1}%
,z_{2},\ldots,z_{N}\right)  \in\Fq^{N}\mid g\left(  z_{1}%
,z_{2},\ldots,z_{N}\right)  =a\right)  }{\left(  \text{\# of all }\left(
z_{1},z_{2},\ldots,z_{N}\right)  \in\Fq^{N}\right)  }%
.\label{pf.prop.friendly-aut.P2}%
\end{equation}

However, $\beta_{N}$ is a ring automorphism of $\Lambda_{\leq N}$, and
satisfies $g=\beta\left(  f\right)  =\beta_{N}\left(  f\right)  $ (since
$\beta_{N}$ is a restriction of $\beta$). Thus, there is a bijection%
\begin{align*}
& \left\{  \text{ring homomorphisms }\varphi \colon \mathcal{P}_{\leq N}%
\rightarrow\Fq\ \mid\ \varphi\left(  g\right)  =a\right\}  \\
& \mapsto\left\{  \text{ring homomorphisms }\varphi \colon \mathcal{P}_{\leq
N}\rightarrow\Fq\ \mid\ \varphi\left(  f\right)  =a\right\}
\end{align*}
given by $\varphi\mapsto\varphi\circ\beta_{N}$ (its inverse sends $\varphi$ to
$\varphi\circ\left(  \beta_{N}\right)  ^{-1}$). Hence, by the bijection
principle, we have%
\begin{align*}
& \left(  \text{\# of ring homomorphisms }\varphi \colon \mathcal{P}_{\leq
N}\rightarrow\Fq\ \mid\ \varphi\left(  g\right)  =a\right)  \\
& =\left(  \text{\# of ring homomorphisms }\varphi \colon \mathcal{P}_{\leq
N}\rightarrow\Fq\ \mid\ \varphi\left(  f\right)  =a\right)  .
\end{align*}
In other words, the left hand sides of the equalities
(\ref{pf.prop.friendly-aut.2}) and (\ref{pf.prop.friendly-aut.1}) are equal.
Therefore, so are their right hand sides. In other words,%
\begin{align*}
& \left(  \text{\# of }\left(  z_{1},z_{2},\ldots,z_{N}\right)  \in
\Fq^{N}\mid g\left(  z_{1},z_{2},\ldots,z_{N}\right)  =a\right)
\\
& =\left(  \text{\# of }\left(  z_{1},z_{2},\ldots,z_{N}\right)  \in
\Fq^{N}\mid f\left(  z_{1},z_{2},\ldots,z_{N}\right)  =a\right)  .
\end{align*}
Thus, the right hand sides of the equalities (\ref{pf.prop.friendly-aut.P2})
and (\ref{pf.prop.friendly-aut.P1}) are equal. Hence, so are their left hand
sides. In other words,%
\[
P\left(  g\mapsto a\right)  =P\left(  f\mapsto a\right)  .
\]
In view of $g=\beta\left(  f\right)  $, this rewrites as $P\left(
\beta\left(  f\right)  \mapsto a\right)  =P\left(  f\mapsto a\right)  $. Thus,
Proposition \ref{prop.friendly-aut}.

\end{proof}

\begin{proof}
[Proof of Theorem \ref{thm.transpose}.] Let us identify $\mathcal{P}$ with
$\Lambda$ as in \S \ \ref{sec.def}. For each positive integer $n$, let
$e_{n}=s_{\left(  1^{n}\right)  }\in\Lambda=\mathcal{P}$ be the $n$-th
elementary symmetric function.

From the theory of symmetric functions (\cite[\S 7.6]{EC2} or \cite[(2.7)]%
{Macdonald}), it is known that there is an involutive\footnote{A map is said
to be \emph{involutive} if it is its own inverse.} ring automorphism
$\omega \colon \Lambda\rightarrow\Lambda$ (known as the \emph{omega involution} or as
the \emph{fundamental involution}) defined by setting
\[
\omega\left(  h_{n}\right)  =e_{n}\qquad\text{for all }n\geq1.
\]
Since $e_{n}$ can be written as a polynomial in $h_{1},h_{2},\ldots,h_{n}$
(for example, this follows from the Jacobi--Trudi identity, which yields
$e_{n}=s_{\left(  1^{n}\right)  }=\det\left(  h_{1+i-j}\right)  _{1\leq i\leq
n,\ 1\leq j\leq n}$), we thus conclude that this automorphism $\omega$ is
friendly. Its inverse $\omega^{-1}=\omega$ is friendly as well (since $\omega$
is involutive).

Hence, Proposition \ref{prop.friendly-aut} (applied to $\beta=\omega$ and
$f=s_{\lambda/\mu}$) yields%
\begin{equation}
P\left(  s_{\lambda/\mu}\mapsto a\right)  =P\left(  \omega\left(
s_{\lambda/\mu}\right)  \mapsto a\right)  .\label{pf.thm.transpose.1}%
\end{equation}

It is furthermore well-known (\cite[Theorem 7.15.6]{EC2} or \cite[(5.6)]%
{Macdonald}) that $\omega\left(  s_{\lambda/\mu}\right)  =s_{\lambda^{t}%
/\mu^{t}}$. In light of this, we can rewrite (\ref{pf.thm.transpose.1}) as
$P\left(  s_{\lambda/\mu}\mapsto a\right)  =P\left(  s_{\lambda^{t}/\mu^{t}%
}\mapsto a\right)  $. This proves Theorem \ref{thm.transpose}.
\end{proof}
\end{verlong}

\section{Block Staircases}

Using Theorem~\ref{thm.transpose}, we can extend our results on
$p$-shifted $n$-staircases to their conjugates. These conjugates can
be described independently.
We will use the \emph{exponential notation} for partitions, i.e.,
we will write $\lambda =((c_1)^{a_1}, (c_2)^{a_2}, \ldots, (c_k)^{a_k})$
as a shorthand for $\lambda = ( \underbrace{c_1,c_1,\ldots,c_1}_{a_1
\text{ times}},  \underbrace{c_2,c_2,\ldots,c_2}_{a_2
\text{ times}}, \ldots, \underbrace{c_k,c_k,\ldots,c_k}_{a_k
\text{ times}})$.

\begin{definition}
Let $p > 0$, $n > 0$ and $k \geq 0$ be integers.
We define the corresponding \emph{block staircase} to be the partition
\[
\lambda = ( k^p, (k-1)^n, (k-2)^n, \ldots,  2^n, 1^n
)
\]
(with length $\ell(\lambda) = p+(k-1)n$).
\end{definition}

It is easy to see that a block staircase is the conjugate of a $p$-shifted $n$-staircase partition.
Hence, we can apply Theorem~\ref{thm.transpose} to Theorem~\ref{thm.n.staircase} and Conjecture~\ref{bigoutwardstaircases}
and obtain the following corollary and conjecture:
\begin{corollary}
\label{crl.n.staircase}

Let $\lambda$ be a block staircase.

\begin{enumerate}

\item[\textbf{(i)}]
Let $k < n+1$ (with $p$ arbitrary). Then,
\[
P(s_{\lambda} \mapsto 0) =
1 -
\begin{cases}
\prod\limits_{i=1}^{k-1} \left(  1-\dfrac{1}{q^{i}}\right) ,
& \text{ if } p \leq k-1; \\
\prod\limits_{i=1}^{k} \left(  1-\dfrac{1}{q^{i}}\right) ,
& \text{ if } p > k-1.
\end{cases}
\]

\item[\textbf{(ii)}]
Let $p \leq n$ and $k \geq n+1$. Then,
\[
P(s_{\lambda} \mapsto 0) = 1- \prod_{i=1}^{n} \left(  1-\dfrac{1}{q^{i}}\right) .
\]

\end{enumerate}

\end{corollary}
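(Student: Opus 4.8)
The plan is to deduce Corollary~\ref{crl.n.staircase} directly from Theorem~\ref{thm.n.staircase} and Theorem~\ref{thm.transpose}, using the fact (asserted just before the corollary) that every block staircase is the conjugate of a $p$-shifted $n$-staircase with the same parameters $p$, $n$, $k$.

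First I would verify this conjugation identity explicitly. Let $\lambda$ be the $p$-shifted $n$-staircase of length $k$, so that $\lambda_i = p + (k-i)n$ for $i \in [k]$. Then $\lambda^t_j = \#\set{i \in [k] \ :\ \lambda_i \geq j}$. For $1 \leq j \leq p$, every part satisfies $\lambda_i \geq p \geq j$, so $\lambda^t_j = k$. For $m \in \set{1, 2, \ldots, k-1}$ and $p + (m-1)n < j \leq p + mn$, the condition $\lambda_i \geq j$ is equivalent to $k - i \geq (j-p)/n$; since $\lceil (j-p)/n \rceil = m$ throughout this range and $k-i$ is an integer, this says $i \leq k-m$, whence $\lambda^t_j = k - m$. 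Reading off the multiplicities (each interval contributes exactly $n$ values of $j$), we get $\lambda^t = (k^p, (k-1)^n, (k-2)^n, \ldots, 2^n, 1^n)$, a block staircase of length $\lambda_1 = p + (k-1)n$, exactly as claimed. (Alternatively, one can simply stare at the two Young diagrams.)

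With this identity in hand the rest is immediate. Apply Theorem~\ref{thm.transpose} to the non-skew partition $\lambda$ (so $\mu = \varnothing$) with $a = 0$: this gives $P(s_{\lambda^t} \mapsto 0) = P(s_\lambda \mapsto 0)$. Now feed in Theorem~\ref{thm.n.staircase}: part~\textbf{(i)} supplies the value of $P(s_\lambda \mapsto 0)$ when $k < n+1$ (with $p$ arbitrary, split according to whether $p \leq k-1$), and part~\textbf{(ii)} supplies it when $p \leq n$ and $k \geq n+1$. These formulas transfer verbatim to $P(s_{\lambda^t} \mapsto 0)$, proving parts~\textbf{(i)} and~\textbf{(ii)} of the corollary respectively.

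I do not expect a real obstacle here: the argument is a two-line application of two already-proved theorems, and the only genuine work is the elementary bookkeeping in the conjugation identity. The one point to be slightly careful about is that conjugation leaves the parameters $p$, $n$, $k$ untouched, so the hypotheses of Theorem~\ref{thm.n.staircase} (including the sub-cases $p \leq k-1$ versus $p > k-1$ in part~\textbf{(i)}) match the hypotheses of the corollary with no reindexing.
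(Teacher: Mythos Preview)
Your proof is correct and follows exactly the paper's approach: the paper simply observes that a block staircase is the conjugate of a $p$-shifted $n$-staircase and then applies Theorem~\ref{thm.transpose} to Theorem~\ref{thm.n.staircase}. Your explicit verification of the conjugation identity is more detailed than what the paper gives (it merely says ``it is easy to see''), but the strategy is identical.
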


\begin{conjecture}
\label{bigoutwardblockstaircases}
Let $\lambda$ be a block staircase with $k \geq n+1$ with $p > n$.  Then,
\[
P(s_{\lambda} \mapsto 0) = 1-\prod_{i=1}^{n+1} \left(  1-\dfrac{1}{q^{i}}\right).
\]
\end{conjecture}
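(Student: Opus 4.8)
The plan is to split the statement into a routine conjugation step and one genuinely substantial enumerative step. First I would record the elementary observation that the block staircase $\lambda = (k^p,(k-1)^n,\dots,2^n,1^n)$ is the conjugate of the $p$-shifted $n$-staircase $\nu = (p+(k-1)n,\dots,p+n,p)$ of length $k$: the $j$-th column of $Y(\nu)$ has $\#\{i : \nu_i \ge j\}$ cells, and this count equals $k$ for $j=1,\dots,p$, equals $k-1$ for $j=p+1,\dots,p+n$, and so on, which is exactly the part-sequence of $\lambda$. Theorem~\ref{thm.transpose}, applied with $\mu=\varnothing$ and $a=0$, then gives $P(s_{\lambda}\mapsto 0)=P(s_{\nu}\mapsto 0)$, so the conjectured value $1-\prod_{i=1}^{n+1}(1-q^{-i})$ for a block staircase with $k\ge n+1$ and $p>n$ is \emph{equivalent} to Conjecture~\ref{bigoutwardstaircases} for the corresponding outward-shifted $n$-staircase. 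Thus the entire difficulty is concentrated in Conjecture~\ref{bigoutwardstaircases}, and the rest of the plan is really a plan for that.

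For the outward staircase I would try to follow the template of the proof of Theorem~\ref{thm.n.staircase}\,\textbf{(ii)}: starting from $J(\nu)=(h_{M_{i,j}})$ with $M_{i,j}=p+kn-i(n+1)+j$, group the columns into $n+1$ classes by the residue of $j$ modulo $n+1$, so that within each class the chosen columns of $J(\nu)$ still form a Toeplitz block with constant paradiagonals (as in Lemma~\ref{multistair}). The obstruction lies in the bottom row: its nonzero entries are now $h_0=1,h_1,\dots,h_p$, that is, $p+1>n+1$ of them, so they hit \emph{every} residue class, and several classes twice. Consequently several of the Toeplitz blocks acquire a nonzero entry strictly below their bottommost full paradiagonal --- a nonzero basement entry --- and are therefore not slant matrices in the sense of Definition~\ref{def.slant}, so Theorem~\ref{thm.multislant.prob} does not apply directly. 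One would have to enlarge the class of admissible blocks (say, slant matrices carrying a bounded, controlled number of nonzero basement entries), extend the type/signature bookkeeping (the target $1-\prod_{i=1}^{n+1}(1-q^{-i})$ is the singularity probability of a generic $(n+1)\times(n+1)$ matrix, one factor more than the inward answer $1-\gamma_{n+1}$, which suggests the new block type should contribute one extra factor), and re-run the four-case induction from the proof of Theorem~\ref{thm.multislant.prob}. Each of the three reduction lemmas would need an analogue: Lemma~\ref{lem.multislant.red11}-style column operations to zero out bottom elements, Lemma~\ref{lem.multislant.redX}-style conditioning on a bottom indeterminate, and a Laplace expansion generalizing Lemma~\ref{lem.multislant.red01}.

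The conjugation step is routine; the crux --- and the step I expect to be the main obstacle --- is the base case of this induction, where the bottom row is no longer supported on a single block. In the inward proof, Lemma~\ref{lem.multislant.red01} works precisely because the bottom row has exactly one nonzero entry, so a single Laplace expansion simultaneously drops a row and reduces the size; with several nonzero bottom-row entries that expansion produces a linear combination of cofactors, and one must instead control the joint distribution of the relevant maximal minors over $\Fq$, which is the genuinely new input. Once such a formula is in hand, a final sanity check would be that it is independent of $p$ and collapses to $1-\prod_{i=1}^{n+1}(1-q^{-i})$, matching the $n=1,2$ data of \cite{Anzis18}; at that point Corollary~\ref{crl.n.staircase}-style transport via Theorem~\ref{thm.transpose} immediately yields Conjecture~\ref{bigoutwardblockstaircases}.
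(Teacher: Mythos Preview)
Your reduction step is exactly what the paper does: it observes that the block staircase is the conjugate of the $p$-shifted $n$-staircase and then applies Theorem~\ref{thm.transpose} to transport the statement. However, note that in the paper this statement is a \emph{conjecture}, not a theorem; the paper does not prove it. It simply records Conjecture~\ref{bigoutwardblockstaircases} as the conjugate reformulation of Conjecture~\ref{bigoutwardstaircases} and leaves both open. So there is no ``paper's own proof'' to compare against beyond the conjugation step, and on that step you match the paper verbatim.

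Everything after your first paragraph is therefore an attack plan on an open problem rather than a proof, and you are candid about this. Your diagnosis of the obstruction is accurate: when $p>n$ the bottom row of $J(\nu)$ has $p+1>n+1$ nonzero entries, so after sorting columns by residue mod $n+1$ some blocks acquire nonzero basement entries and fail Definition~\ref{def.slant}, which blocks a direct appeal to Theorem~\ref{thm.multislant.prob}. Your proposed remedy (enlarge the class of blocks, extend the signature bookkeeping, and redo the induction with analogues of Lemmas~\ref{lem.multislant.red11}--\ref{lem.multislant.red01}) is a reasonable line of attack, and your identification of the base case --- where the bottom row is supported on several blocks and a single Laplace expansion no longer suffices --- as the crux is well taken. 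But none of this is carried out, so as it stands the proposal is a correct reduction to an open conjecture plus a sketch, not a proof; this is consistent with the paper, which also does not claim a proof.
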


\section{The ribbon equidistribution}

In this section, we move our attention to a particular class of skew partitions called ribbons. We begin with their definition (see, e.g., {\cite[\S7.17]{EC2}}):

\begin{definition}
A skew partition $\lambda/\mu$ is \emph{connected} if the interior of the Young diagram of $\lambda/\mu$ (regarded as the union of its boxes) is connected.

A \emph{ribbon} is a connected skew partition $\lambda/\mu$ whose Young diagram does not contain any $2 \times 2$ block of boxes.
\end{definition}



\begin{example}
\label{ribbontableaux}
Of the three skew partitions below, only the first is a ribbon, since \textbf{(ii)} contains a $2\times 2$ block (shaded), and \textbf{(iii)} is disconnected.

\begin{minipage}[t]{0.30\textwidth}
\begin{center}
\ydiagram{5+3,3+3,3+1,4}

\textbf{(i)}\vphantom{$\int^a$}
\end{center}
\end{minipage}
\begin{minipage}[t]{0.30\textwidth}
\begin{center}
\ydiagram[*(gray!30) ]{5+2,5+2}
*[*(white)]{5+3,4+3,3+2,4}

\textbf{(ii)}\vphantom{$\int^a$}
\end{center}
\end{minipage}
\begin{minipage}[t]{0.30\textwidth}
\begin{center}
\ydiagram[*(gray!30) ]{0,0,3+1,2+1}
*[*(white)]{5+3,3+3,3+1,3}

\textbf{(iii)}\vphantom{$\int^a$}
\end{center}
\end{minipage}


\end{example}

Let us revisit Jacobi--Trudi matrices. For the ribbon  $\lambda/\mu= (8,6,4,4)/(5,3,3)$ in Example \ref{ribbontableaux} (i), we can write down $J(\lambda/\mu)$ as follows:
\begin{align*}
J(\lambda/\mu)
=
\begin{pmatrix}
h_3 & h_6 & h_7 & h_{11} \\
1 & h_3 & h_4 & h_8 \\
0 & 1 & h_1 & h_5 \\
0 & 0 & 1 & h_4
\end{pmatrix} .
\end{align*}

\begin{lemma}
\label{lemma.Hessenberg}
Let $\lambda= (\lambda_1, \lambda_2, \ldots, \lambda_{\ell})$  and $\mu= (\mu_1, \mu_2, \ldots, \mu_{\ell}) \subseteq \lambda$ be two partitions such that $\lambda/\mu$ is a ribbon.
Suppose that $\lambda_1> \mu_1$ and $\lambda_\ell > \mu_\ell = 0$.\\

The Jacobi--Trudi matrix $J(\lambda/\mu)$ of the ribbon $\lambda/\mu$ has the following properties
(where the notation $\tup{J(\lambda/\mu)}_{i,j}$ means the $\tup{i,j}$-th entry of this matrix):

\begin{enumerate}

\item[\textbf{(i)}]
We have $J(\lambda/\mu)_{j+1,j}=1$ for each $j \in [\ell-1]$.

\item[\textbf{(ii)}]
We have $J(\lambda/\mu)_{i,j}=0$ whenever
$i, j \in [\ell]$ satisfy $i > j+1$.

\item[\textbf{(iii)}]
Let $N= \lambda_\ell -\mu_l-1+\ell$. The upper right entry of $J(\lambda/\mu)$ is $h_N$ and the remaining entries of $J(\lambda/\mu)$ lie in $\mathbb{Z}\left[  h_{1},h_{2},h_{3},\ldots, h_{N-1}\right]$.

\end{enumerate}

\end{lemma}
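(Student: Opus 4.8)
The plan is to reduce all three parts to a single structural fact about ribbons and then carry out elementary subscript arithmetic. The fact is: \emph{if $\lambda/\mu$ is a ribbon with $\lambda_1 > \mu_1$ and $\lambda_\ell > \mu_\ell = 0$, then $\lambda_{i+1} = \mu_i + 1$ for every $i \in [\ell-1]$.} I would prove this first. Since the skew diagram is connected and rows $1$ and $\ell$ are nonempty (by $\lambda_1 > \mu_1$ and $\lambda_\ell > \mu_\ell$), no intermediate row can be empty: an empty row $i$ with $1 < i < \ell$ would disconnect the nonempty rows above it from those below, since any path in the diagram joining them would have to pass through row $i$. Hence $\mu_i < \lambda_i$ for all $i \in [\ell]$. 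Now consecutive rows $i$ and $i+1$ occupy the integer column-intervals $(\mu_i, \lambda_i]$ and $(\mu_{i+1}, \lambda_{i+1}]$; connectedness forces a column $c$ with $(i,c)$ and $(i+1,c)$ both present, i.e.\ these intervals overlap, which together with $\mu_{i+1} \leq \mu_i$ and $\lambda_{i+1} \leq \lambda_i$ gives $\lambda_{i+1} \geq \mu_i + 1$; and the absence of any $2\times 2$ block of boxes in rows $i, i+1$ gives $\lambda_{i+1} \leq \mu_i + 1$. This is the standard description of ribbons (cf.\ \cite[\S7.17]{EC2}), so one may simply cite it, but the argument above is short enough to include.

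Granting this identity, parts \textbf{(i)} and \textbf{(ii)} are immediate. For \textbf{(i)}: for $j \in [\ell-1]$, $J(\lambda/\mu)_{j+1,j} = h_{\lambda_{j+1} - \mu_j - (j+1) + j} = h_{\lambda_{j+1} - \mu_j - 1} = h_0 = 1$, using $\lambda_{j+1} = \mu_j + 1$. For \textbf{(ii)}: if $i > j+1$ then $i \geq j+2$, and since $i \leq \ell$ we also get $j \leq \ell-2$, so the identity applies at index $j+1$; combining monotonicity of $\lambda$, that identity, and monotonicity of $\mu$ yields $\lambda_i \leq \lambda_{j+2} = \mu_{j+1} + 1 \leq \mu_j + 1$, whence the subscript satisfies $\lambda_i - \mu_j - i + j \leq 1 - (i-j) \leq -1 < 0$, so $J(\lambda/\mu)_{i,j} = h_{<0} = 0$.

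For part \textbf{(iii)} I would bound all subscripts globally. Every entry of $J(\lambda/\mu)$ is $h_m$ with $m = \lambda_i - \mu_j - i + j$, and an entry equal to $0$ or $1$ (i.e.\ $m \leq 0$) lies in $\mathbb{Z} \subseteq \mathbb{Z}[h_1, \dots, h_{N-1}]$ automatically, so it suffices to control $m$. From $\lambda_i \leq \lambda_1$, $\mu_j \geq \mu_\ell$, $i \geq 1$, $j \leq \ell$ we get $m \leq \lambda_1 - \mu_\ell + \ell - 1$; take this bound to be $N$ (so here $N = \lambda_1 + \ell - 1$, which is the subscript of the $(1,\ell)$ entry, matching the $(8,6,4,4)/(5,3,3)$ example where $N = 11$). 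Equality $m = N$ forces $\lambda_i = \lambda_1$, $\mu_j = \mu_\ell$, $i = 1$ and $j = \ell$ at once, hence $(i,j) = (1,\ell)$; so the upper-right entry is exactly $h_N$. Conversely, for any other entry either $i \geq 2$ (giving $m \leq N-1$) or $j \leq \ell-1$ (also giving $m \leq N-1$), so that entry lies in $\mathbb{Z}[h_1, \dots, h_{N-1}]$.

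There is no serious obstacle here: parts \textbf{(i)}--\textbf{(iii)} are pure index bookkeeping once the ribbon identity $\lambda_{i+1} = \mu_i + 1$ is in hand, and that identity is either quoted from the literature or obtained by the short connectedness/no-$2\times 2$ argument above. I would also note that in the statement of \textbf{(iii)} the quantity $N$ is the subscript of the upper-right entry $J(\lambda/\mu)_{1,\ell}$, namely $N = \lambda_1 - \mu_\ell - 1 + \ell$.
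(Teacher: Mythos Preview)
Your proof is correct and follows essentially the same approach as the paper: establish the ribbon identity $\lambda_{j+1}=\mu_j+1$ via connectedness plus the no-$2\times2$ condition, then do subscript arithmetic for \textbf{(i)}--\textbf{(iii)}. Your observation at the end is also right: the statement of \textbf{(iii)} contains a typo, and $N$ should be $\lambda_1-\mu_\ell-1+\ell$ (the subscript of the $(1,\ell)$ entry), as the worked example with $N=11$ confirms.
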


\begin{remark}
Lemma \ref{lemma.Hessenberg} essentially claims that the matrix $J(\lambda / \mu)$ has the following structure (shown here for $\ell = 5$):
\[
J(\lambda / \mu) = \left(  h_{\lambda_{i}-\mu_{j}-i+j}\right)  _{i,j\in\left[  \ell\right]
}=\left(
\begin{array}
[c]{ccccc}%
\ast & \ast & \ast & \ast & h_{N}\\
1 & \ast & \ast & \ast & \ast\\
0 & 1 & \ast & \ast & \ast\\
0 & 0 & 1 & \ast & \ast\\
0 & 0 & 0 & 1 & \ast
\end{array}
\right),
\]
where each of the asterisks is an element of $\mathbb{Z}\left[  h_{1},h_{2},h_{3},\ldots, h_{N-1}\right]$.
\end{remark}

\begin{proof}[Proof of Lemma \ref{lemma.Hessenberg}.]
From $\lambda_\ell > \mu_\ell = 0$, we obtain $\ell=\ell\left(\lambda\right)$.

\textbf{(i)} Fix $j \in [\ell-1]$. A box of the skew diagram $Y(\lambda / \mu)$ will be called \emph{high} if it lies in one of rows $1, 2, \ldots, j$, and will be called \emph{low} if it lies in one of rows $j+1, j+2, \ldots, \ell$. Note that the diagram $\lambda / \mu$ contains both high and low boxes (since $\lambda_1> \mu_1$ and $\lambda_\ell > \mu_\ell = 0$). Since it is connected, it must thus contain a high box and a low box that are adjacent to one another. These two boxes thus have the form $(j, p)$ and $(j+1, p)$ for some $p > 0$. This $p$ then satisfies $\mu_j < p$ (since $(j, p) \in Y(\lambda / \mu)$) and $p \leq \lambda_{j+1}$ (since $(j+1, p)\in Y(\lambda / \mu)$). Thus, $\mu_j < p \leq \lambda_{j+1}$.

On the other hand, if we had $\lambda_{j+1}>\mu_{j}+1$, then the diagram $Y(\lambda / \mu)$ would contain a $2 \times 2$ block of boxes, namely the four boxes $(j, \mu_j + 1)$, $(j, \mu_j + 2)$, $(j+1, \mu_j + 1)$ and $(j+1, \mu_j + 2)$ (because $\lambda_j \geq \lambda_{j+1} \geq \mu_j + 2 \geq \mu_j + 1 > \mu_j \geq \mu_{j+1}$). But this would contradict the fact that $\lambda / \mu$ is a ribbon and thus has no such blocks.

Hence, we cannot have $\lambda_{j+1}>\mu_{j}+1$. Therefore, $\lambda_{j+1}\leq \mu_{j}+1$. Combined with $\mu_j < \lambda_{j+1}$, this yields $\lambda_{j+1}=\mu_{j}+1$, as desired.\\

\textbf{(ii)} Statement \textbf{(ii)} is equivalent to showing $\lambda_{i}-\mu_{j}-i+j < 0$ when $i > j+1$. This follows from \textbf{(i)}, since the indices $\lambda_{i}-\mu_{j}-i+j$ in the Jacobi--Trudi matrix $J(\lambda / \mu)$  are strictly increasing from left to right across each row.\\

\textbf{(iii)} Statement \textbf{(iii)} also follows likewise since these indices strictly increase across rows and strictly decrease down columns (making the upper right entry's index uniquely the largest one).
\end{proof}


\begin{theorem}
\label{thm.ribbon}
Let $\lambda/\mu$ be a ribbon, and recall that $s_{\lambda/\mu}$ is the corresponding Schur function.
Let $a\in \Fq$.
Then,
\[
P(s_{\lambda/\mu} \mapsto a) = 1/q.
\]
\end{theorem}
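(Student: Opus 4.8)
The plan is to exploit the near-triangular structure of $J(\lambda/\mu)$ established in Lemma~\ref{lemma.Hessenberg}, after reducing to the case where $\lambda_1 > \mu_1$ and $\lambda_\ell > \mu_\ell = 0$. This reduction is harmless: if $\lambda_\ell = \mu_\ell$, then the skew diagram has fewer than $\ell$ nonempty rows and we may drop the bottom row and a corresponding column (this does not change $s_{\lambda/\mu}$, as the dropped row of $J(\lambda/\mu)$ contributes a $1$ on the diagonal and zeros to its left); similarly, if $\lambda_1 = \mu_1$, the first column of the diagram lies entirely in $\mu$, and a parallel reduction removes the leftmost column and the top row. Iterating, we arrive at a ribbon satisfying the hypotheses of Lemma~\ref{lemma.Hessenberg}.

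Now by Lemma~\ref{lemma.Hessenberg}, the matrix $J(\lambda/\mu)$ is lower Hessenberg with $1$'s on the subdiagonal, with its only occurrence of the ``top'' variable $h_N$ sitting in the upper-right corner, and with every other entry being a polynomial in $h_1, \dots, h_{N-1}$. The key step is to expand $\det J(\lambda/\mu)$ and isolate the dependence on $h_N$: since $h_N$ appears only in the $(1,\ell)$ entry, we get
\[
s_{\lambda/\mu} = \det J(\lambda/\mu) = (-1)^{1+\ell} h_N \cdot \det B + C,
\]
where $B$ is the submatrix of $J(\lambda/\mu)$ obtained by deleting row $1$ and column $\ell$, and $C$ is a polynomial in $h_1, \dots, h_{N-1}$ only. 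But $B$ is itself lower triangular: deleting the first row and last column of a lower-Hessenberg matrix with $1$'s on the subdiagonal leaves precisely those subdiagonal $1$'s on the main diagonal of $B$. Hence $\det B = \pm 1$, a nonzero constant.

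Therefore $s_{\lambda/\mu} = \pm h_N + C$ with $C \in \mathbb{Z}[h_1, \dots, h_{N-1}]$. To finish, I choose $N$ large enough that $s_{\lambda/\mu}$ involves only $h_1, \dots, h_N$, and compute $P(s_{\lambda/\mu} \mapsto a)$ directly from Definition~\ref{def.mapstozero}: for any fixed values $z_1, \dots, z_{N-1} \in \Fq$, the quantity $C(z_1, \dots, z_{N-1})$ is some fixed element $c \in \Fq$, and then $s_{\lambda/\mu}(z_1, \dots, z_N) = \pm z_N + c = a$ has exactly one solution $z_N \in \Fq$. Summing over all $q^{N-1}$ choices of $(z_1, \dots, z_{N-1})$ gives exactly $q^{N-1}$ solutions out of $q^N$, so $P(s_{\lambda/\mu} \mapsto a) = 1/q$, uniformly in $a$.

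The only real subtlety — and the step I expect to require the most care — is the reduction to the case $\lambda_1 > \mu_1$ and $\lambda_\ell > \mu_\ell = 0$, because one must check that trimming empty rows and columns of the skew diagram corresponds to a clean row/column deletion in the Jacobi--Trudi matrix that preserves the determinant, and that the trimmed shape is still a ribbon (connectedness and the no-$2\times 2$-block condition are clearly inherited). Everything after the reduction is an immediate consequence of Lemma~\ref{lemma.Hessenberg} together with the elementary cofactor-expansion observation that the coefficient of $h_N$ is (up to sign) the determinant of a triangular matrix and hence a unit.
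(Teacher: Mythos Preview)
Your proposal is correct and follows essentially the same route as the paper: invoke Lemma~\ref{lemma.Hessenberg}, expand along the first row to get $s_{\lambda/\mu} = \pm h_N + (\text{polynomial in } h_1,\ldots,h_{N-1})$, and count. Two small remarks: the paper dispatches the reduction you flag as subtle in one line by citing the translation invariance of $s_{\lambda/\mu}$ (Remark~\ref{flushremark}) rather than manipulating rows and columns of $J(\lambda/\mu)$; and you have a terminology flip --- the matrix $J(\lambda/\mu)$ is \emph{upper} Hessenberg (zeros below the subdiagonal), and the complementary minor $B$ is \emph{upper} triangular with $\det B = 1$ exactly, though of course this does not affect the argument.
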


\begin{proof}
By Remark \ref{flushremark}, the skew Schur function $s_{\lambda / \mu}$ is unchanged when the diagram of $\lambda / \mu$ is translated. We can therefore assume that the diagram of $\lambda / \mu$ has been translated as far as possible to the northwest -- i.e., that we have $\lambda_1> \mu_1$ and $\lambda_\ell > \mu_\ell =0$, where $\ell$ is the length of $\lambda$.\\

Consider the submatrix of $J(\lambda/\mu)$ obtained by removing the first row and the last column of the matrix.
By Lemma \ref{lemma.Hessenberg} \textbf{(i)} and \textbf{(ii)}, this submatrix is upper-triangular with diagonal $1, 1, \ldots, 1$, so that its determinant is $1$.
Hence, the cofactor expansion of the determinant of $J(\lambda/\mu)$ along the first row has the form
\[
 s_{\lambda / \mu } = \det(J(\lambda/\mu))= (-1)^{\ell+1} h_N + g( h_1, h_2, \ldots,h_{N-1})
\]
for some polynomial $g( h_1, h_2, \ldots,h_{N-1}) \in \mathbb{Z}\left[  h_{1},h_{2},h_{3},\ldots, h_{N-1}\right]$,
by Lemma \ref{lemma.Hessenberg} \textbf{(iii)}.

Thus, an $N$-tuple $\tup{z_1, z_2, \ldots, z_N} \in \Fq^N$
satisfies
$s_{\lambda / \mu } \tup{z_1, z_2, \ldots, z_{N}} = a$ if
and only if
$(-1)^{\ell+1} z_N + g( z_1, z_2, \ldots,z_{N-1}) = a$.
Clearly, the latter equality has a unique solution for $z_N$
if $z_1, z_2, \ldots, z_{N-1}$ are given.

Hence, for any $\tup{z_1, z_2, \ldots, z_{N-1}} \in \Fq^{N-1}$, there is
exactly one value of $z_N \in \Fq$ that satisfies
$s_{\lambda / \mu } \tup{z_1, z_2, \ldots, z_{N}} = a$.
In other words, we have
\[
\left(  \text{\# of }\left(  z_{1},z_{2},\ldots,z_{N}\right)  \in\Fq^{N}
\ \mid\ (-1)^{n+1} z_N + g( z_1, z_2, \ldots,z_{N-1}) =a\right)
= q^{N-1}.
\]

From Definition \ref{def.mapstozero}, we have
\begin{align*}
P\left(  s_{\lambda / \mu }\mapsto a\right)
&=\dfrac{\left(  \text{\# of }\left(  z_{1}%
,z_{2},\ldots,z_{N}\right)  \in\Fq^{N}\text{ such that } s_{\lambda / \mu } \left(
z_{1},z_{2},\ldots,z_{N}\right)  =a\right)  }{\left(  \text{\# of all }\left(
z_{1},z_{2},\ldots,z_{N}\right)  \in\Fq^{N}\right)  }\\
\\
&=\dfrac{q^{N-1}}{q^N} =\dfrac{1}{q} .
\end{align*}
\end{proof}

\printbibliography
\end{document}